\theoremstyle{theorem}
\newtheorem{theorem}{Theorem}[section]
\newtheorem{corollary}[theorem]{Corollary}
\newtheorem{lemma}[theorem]{Lemma}
\newtheorem{proposition}[theorem]{Proposition}
\newtheorem{definition}{Definition}[section]
\newtheorem{example}{Example}[section]
\newtheorem{remark}{Remark}[section]
\numberwithin{equation}{section}
\newcommand{\s}{\sigma}
\DeclareSymbolFont{symbolsC}{U}{txsyc}{m}{n}
\DeclareMathSymbol{\notniFromTxfonts}{\mathrel}{symbolsC}{61}
\title{Skew-convex function rings and evaluation of skew rational functions} 
\author{Masood Aryapoor\\
\tiny{\textit{Division of Mathematics and Physics}}\\
\tiny{\textit{M\"{a}lardalen  University}}\\
\tiny{\textit{Hamngatan 15, 632 17, Eskilstuna, 
Sweden
}}
%\footnote{masood.aryapoor@mdh.se}
}
 \date{}
\begin{document}
 \maketitle
\begin{abstract}
\noindent
The product formula for evaluating products of skew polynomials is used to construct a class of rings.  As an application, we present a method of evaluating quotients of skew polynomials.      
\end{abstract}
%%%%%%%%%%%%%%%%%%%%%%%%%%%%%%%%%%%%%%%%%%%%%%%%%%%%%%%%%%%%%%%%
%%%%%%%%%%%%%%%%%%%%%%%%%%%%%%%%%%%%%%%%%%%%%%%%%%%%%%%%%%%%%%%%
%%%%%%%%%%%%%%%%%%%%%%%%%%%%%%%%%%%%%%%%%%%%%%%%%%%%%%%%%%%%%%%%
%%%%%%%%%%%%%%%%%%%%%%%%%%%%%%%%%%%%%%%%%%%%%%%%%%%%%%%%%%%%%%%%
\begin{section}{Introduction} 
We present the notion of the ``skew product" which is a particular binary operation on sets of functions with values in a fixed skew field (see Definition \ref{(D)product}). The concept of the skew product is motivated by the ``product formula" for evaluating skew polynomials introduced by Lam and Leroy \cite[Theorem 2.7]{LamLeory1988}. We show that the skew product gives rise to near-ring structures. Restricting our attention to the class of ``skew-convex" functions, we arrive at the notion of skew-convex function rings (see Definition \ref{(D)skew convex} and Theorem \ref{(T)Thetwistedring}). It turns out that skew-convex function rings are closely related to endomorphism rings (see Section 2.2). The method of evaluating skew polynomials has found interesting applications (see, for example, \cite{MR4586427, LamLeory1988, MR2025912, lam1994hilbert}). Using skew-convex function rings, we extend the method of evaluating skew polynomials to a method of evaluating quotients of skew polynomials.

The paper is organized as follows. Subsection 2.1 introduces the notions of the skew product and skew-convex functions, and gives their basic properties. Subsection 2.2 deals with some general structural results. In Subsection 2.3, we study skew-invertible functions, that is, functions that are invertible with respect to the skew product. 
 Section 3 deals with evaluating quotients of skew polynomials.  
  
\end{section} 
%%%%%%%%%%%%%%%%%%%%%%%%%%%%%%%%%%%%%%%%%%%%%%%%%%%%%%%%%%%%%%%%
%%%%%%%%%%%%%%%%%%%%%%%%%%%%%%%%%%%%%%%%%%%%%%%%%%%%%%%%%%%%%%%%

%%%%%%%%%%%%%%%%%%%%%%%%%%%%%%%%%%%%%%%%%%%%%%%%%%%%%%%%%%%%%%%%
%%%%%%%%%%%%%%%%%%%%%%%%%%%%%%%%%%%%%%%%%%%%%%%%%%%%%%%%%%%%%%%%
%%%%%%%%%%%%%%%%%%%%%%%%%%%%%%%%%%%%%%%%%%%%%%%%%%%%%%%%%%%%%%%%
%%%%%%%%%%%%%%%%%%%%%%%%%%%%%%%%%%%%%%%%%%%%%%%%%%%%%%%%%%%%%%%%

\begin{section}{Skew-convex function rings}
In \cite{LamLeory1988}, the authors presented a method of evaluation of skew polynomials over a skew field using which skew polynomials can naturally be considered as functions on the ground skew field.    
It turns out that the value of the product of two skew polynomials at a given point may not be equal to the product of the values of the skew polynomials at the same point. The correct formula for evaluating products of skew polynomials, called the product formula, is given in  \cite[Theorem 2.7]{LamLeory1988}. The product formula can be regarded as a binary operation on functions which we shall call the \textit{skew product} (see Formula \ref{(D)product}). This section introduces the notion of the skew product, and gives  some general facts regarding the skew product. 

%%%%%%%%%%%%%%%%%%%%%%%%%%%%%%%%%%%%%%%%%%%%%%%%%%%%%%%%%%%%%%%%%   
%%%%%%%%%%%%%%%%%%%%%%%%%%%%%%%%%%%%%%%%%%%%%%%%%%%%%%%%%%%%%%%%% 
\begin{subsection}{Skew-convex function rings}
	Let $K$ be  a skew field and $X$ be a (nonempty) set on which the multiplicative group $K^*\colonequals K\setminus \{0\}$ acts (on the left). The action of $a\in K^*$ on $x\in X$ is denoted by $ ^{a}x$. We will freely use the standard terminology of Group Theory. In particular, we use the following notions: An \textit{invariant} subset of $X$ is a set $Y\subset X$ such that $^{a}y\in Y$ for all $a\in K^*$ and $y\in Y$; An \textit{orbit} is a nonempty invariant subset of $X$ which is minimal with respect to inclusion. 
	We denote the set of all functions $f\colon  X\to K$ by $\mathcal{F}(X)$. By abuse of notation, a constant function in $ \mathcal{F}(X)$, whose value is $a\in K$,  is simply denoted by $a$.
	Given functions $f,g\colon  X\to K$,  we let $f+g$ denote the pointwise addition of the functions $f$ and $g$. 
	\begin{definition}
		The \textit{left skew product} of functions $f,g\in \mathcal{F}(X)$ is a function $f\diamond g:X\to K$ defined as follows
		\begin{equation}\label{(D)product}
			(f\diamond g)(x)=\begin{cases}
				f\left(  \, ^{{g(x)}}x\, \right) g(x) & \text{if } g(x)\neq 0,\\
				0& \text{if } g(x)=0.
			\end{cases}
		\end{equation}
	\end{definition}
	The \textit{right skew product} is defined using the formula
	\begin{equation}\label{(D)rightproduct}
		(f\diamond_r g)(x)=\begin{cases}
			f(x)g\left(  \, ^{{f(x)^{-1}}}x\, \right)  & \text{if } f(x)\neq 0,\\
			0& \text{if } f(x)=0.
		\end{cases}
	\end{equation}

	In this paper, we will exclusively work with the left skew product.  Therefore, we shall drop the adjective ``left".  We leave it to the reader to formulate and prove  similar  results for the right skew product.
	 
	It is easy to see that $(a\diamond f)(x)=af(x)$, for every $a\in K,x\in X$. We shall henceforth denote $a\diamond f$ by $af$. Note that $(a,f)\mapsto af$ turns $ \mathcal{F}(X)$ into a left $K$-vector space. 
	In the following lemma, the proof of which is straightforward, we collect some properties of the skew product. 
	\begin{lemma}\label{(L)propertiesofsproduct}
		Let $f,h,g\colon  X\to K$ be arbitrary functions. Then:\\
		(1) The constant function $1$ is a unit for $\diamond$, that is, $f=f\diamond 1=1\diamond f$.\\
		(2) $(f+g)\diamond h=  f\diamond h+ g\diamond h$, that is, the right distributive law (with respect to pointwise addition) holds for the skew product.  \\
		(3) $ (f\diamond g) \diamond h = f \diamond (g\diamond h)$, that is, $\diamond$ is associative. 
	\end{lemma}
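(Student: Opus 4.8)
The plan is to prove all three statements by a direct pointwise check, the only genuine care being needed in the cases where some relevant value vanishes. The two structural facts I would rely on are the group-action axioms ${}^{1}x = x$ and ${}^{a}({}^{b}x) = {}^{ab}x$ for $a,b \in K^*$, $x \in X$, and the fact that $K$, being a skew field, has no zero divisors, so that a product of two nonzero elements of $K$ is again nonzero. For (1): since the constant function $1$ is nowhere $0$, we get $(f \diamond 1)(x) = f({}^{1}x)\cdot 1 = f(x)$ by the first action axiom; and $(1 \diamond f)(x)$ equals $1 \cdot f(x) = f(x)$ when $f(x) \neq 0$ and equals $0 = f(x)$ when $f(x) = 0$. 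Hence $f = f \diamond 1 = 1 \diamond f$.

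For (2), fix $x \in X$. If $h(x) = 0$, both $((f+g)\diamond h)(x)$ and $(f \diamond h + g \diamond h)(x)$ are $0$ by definition. If $h(x) \neq 0$, then all three products are evaluated by twisting at the \emph{same} point ${}^{h(x)}x$, so pointwise additivity of $f+g$ together with right distributivity in $K$ gives $((f+g)\diamond h)(x) = f({}^{h(x)}x)h(x) + g({}^{h(x)}x)h(x) = (f \diamond h)(x) + (g \diamond h)(x)$. It is worth noting in passing that this argument does not yield a left distributive law, since the point ${}^{(g+h)(x)}x$ determined by $g + h$ generally differs from both ${}^{g(x)}x$ and ${}^{h(x)}x$; this is why $\diamond$ produces only a near-ring and not a ring in general.

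For (3), fix $x$ and split into cases according to whether $h(x) = 0$ and whether $g({}^{h(x)}x) = 0$. If $h(x) = 0$, then $((f \diamond g) \diamond h)(x) = 0$ directly, and $(f \diamond (g \diamond h))(x) = 0$ because $(g \diamond h)(x) = 0$. If $h(x) \neq 0$ but $g({}^{h(x)}x) = 0$, then the left side is $0$ because $(f \diamond g)({}^{h(x)}x) = 0$, and the right side is $0$ because $(g \diamond h)(x) = g({}^{h(x)}x)h(x) = 0$. In the remaining case $h(x) \neq 0$ and $g({}^{h(x)}x) \neq 0$, the no-zero-divisors property gives $(g \diamond h)(x) = g({}^{h(x)}x)h(x) \neq 0$, and then expanding each side — using ${}^{g({}^{h(x)}x)}({}^{h(x)}x) = {}^{g({}^{h(x)}x)h(x)}x$ on the left and associativity of multiplication in $K$ — shows both sides equal $f\bigl({}^{g({}^{h(x)}x)h(x)}x\bigr)\, g({}^{h(x)}x)\, h(x)$.

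I do not expect any real obstacle here: every step is mechanical, as the statement already advertises. The one point that repays attention is the zero-bookkeeping in (3) — one must confirm that whenever an intermediate skew product vanishes on one side of the associativity identity it also vanishes on the other, and it is precisely this that forces the appeal to $K$ having no zero divisors.
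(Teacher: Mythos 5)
Your proof is correct and is precisely the direct pointwise verification that the paper leaves to the reader (the lemma is stated there with no written proof, being declared straightforward). Your case analysis in (3), and in particular the observation that the absence of zero divisors in $K$ is what makes $(g\diamond h)(x)\neq 0$ when both factors are nonzero, is exactly the right bookkeeping.
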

	It follows from this lemma that the set $\mathcal{F}(X)$ equipped with pointwise addition and the skew product is a structure known as ``right near-ring" in the literature (see \cite{MR721171}). 
	We note  that the skew product may not be left distributive with respect to pointwise addition. However, the left distributive law  holds for a class of functions  described below. 
	\begin{definition}\label{(D)skew convex}
		A function $f\colon  X\to K$ is called skew convex if 
		$$f\diamond(a+b)=f\diamond a+f\diamond b, \text{ for all } a,b\in K.$$
	\end{definition}	
	The set of all skew-convex functions $f\colon  X\to K$ is denoted by  $\mathcal{S}(X)$. Any constant function belongs to $\mathcal{S}(X)$ since $a\diamond b=ab$ for all $a,b\in K$. In particular, $K$ is a subring of $\mathcal{S}(X)$. More generally, we have the following result. The  easy proof is left to the reader.
	\begin{proposition}\label{(P)constantskewfunction}
		A function $f\colon  X\to K$ which is constant on every orbit in $X$, is skew convex. 
	\end{proposition}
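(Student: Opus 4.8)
The plan is to reduce the defining identity of skew convexity to ordinary right distributivity in $K$, by first establishing that the skew product of such an $f$ with a \emph{constant} function is nothing but pointwise right multiplication by that constant.

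First I would record the basic observation that for every $c\in K^*$ and every $x\in X$, the points $x$ and $^{c}x$ lie in the same orbit: orbits partition $X$, and an orbit is invariant, so if $O$ denotes the orbit containing $x$ then $^{c}x\in O$ as well. Consequently, if $f$ is constant on every orbit, then $f(^{c}x)=f(x)$ for all $c\in K^*$ and all $x\in X$.

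Next I would use this to compute $(f\diamond c)(x)$ for a constant function $c\in K$ and a point $x\in X$. If $c\neq 0$, Definition \ref{(D)product} gives $(f\diamond c)(x)=f(^{c}x)\,c=f(x)\,c$; if $c=0$, Definition \ref{(D)product} gives $(f\diamond c)(x)=0=f(x)\cdot 0$. Hence in all cases $(f\diamond c)(x)=f(x)\,c$, i.e.\ the skew product with a constant is just pointwise right multiplication by that constant.

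Finally, for arbitrary $a,b\in K$ and $x\in X$, this formula together with right distributivity in the skew field $K$ yields
$$(f\diamond(a+b))(x)=f(x)(a+b)=f(x)a+f(x)b=(f\diamond a)(x)+(f\diamond b)(x),$$
and since $x$ was arbitrary this is exactly the identity in Definition \ref{(D)skew convex}. I do not anticipate any real obstacle; the only point requiring a little care is the degenerate case $c=0$, where $^{0}x$ is undefined and one must instead observe that both sides vanish, and one should keep the constant on the right throughout since $K$ need not be commutative.
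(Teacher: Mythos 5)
Your proof is correct; the paper omits this argument entirely (``the easy proof is left to the reader''), and what you give is exactly the intended straightforward verification: $f({}^{c}x)=f(x)$ since ${}^{c}x$ lies in the orbit of $x$, hence $(f\diamond c)(x)=f(x)c$ for every constant $c$ (including $c=0$), and skew convexity reduces to right distributivity in $K$. Your care with the $c=0$ case and with keeping the constant on the right is appropriate.
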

	The following lemma gives an important property of skew-convex functions.
	\begin{lemma}\label{(L)leftdist}
		Let $h\colon  X\to K$ be given. The condition 	$$ h\diamond(f+g) = h\diamond f + h\diamond g,$$
		holds for all functions $f,g\colon  X\to K$ if and only if $h$ is skew convex.
	\end{lemma}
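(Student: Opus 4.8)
The plan is to establish the two directions separately. First I would dispatch the ``only if'' direction, which is immediate: if the displayed identity holds for all functions $f,g\colon X\to K$, then specializing to the case where $f=a$ and $g=b$ are constant functions yields exactly $h\diamond(a+b)=h\diamond a+h\diamond b$ for all $a,b\in K$, i.e.\ $h\in\mathcal{S}(X)$ by Definition \ref{(D)skew convex}.

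The substance is the ``if'' direction. The key observation I would isolate first is a \emph{locality} property of the skew product in its second argument: by Formula \ref{(D)product}, the value $(h\diamond f)(x)$ depends on $f$ only through the single scalar $f(x)$. Precisely, writing $a$ also for the constant function with value $f(x)$ (as the paper does), one has $(h\diamond f)(x)=(h\diamond a)(x)$, since both sides equal $h\bigl(\,^{f(x)}x\,\bigr)f(x)$ when $f(x)\neq 0$ and both equal $0$ when $f(x)=0$. Granting this, fix an arbitrary $x\in X$ and set $a=f(x)$, $b=g(x)$ (as constant functions), so that the constant function $(f+g)(x)$ equals $a+b$. Then
\[
(h\diamond(f+g))(x)=(h\diamond(a+b))(x)=(h\diamond a)(x)+(h\diamond b)(x)=(h\diamond f)(x)+(h\diamond g)(x),
\]
where the first and third equalities are the locality property and the middle one is skew convexity of $h$ (Definition \ref{(D)skew convex}) evaluated at $x$. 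Since $x$ was arbitrary, this gives $h\diamond(f+g)=h\diamond f+h\diamond g$ for all $f,g\colon X\to K$.

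I do not expect a genuine obstacle: the only point that needs care is the locality observation together with the uniform treatment of the degenerate case $f(x)=0$ (resp.\ $g(x)=0$), which the case distinction in Formula \ref{(D)product} already handles cleanly. Everything else is the three-term chain of equalities above, which simply reduces the general statement to the constant-function identity defining $\mathcal{S}(X)$.
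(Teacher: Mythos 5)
Your proof is correct and follows essentially the same route as the paper: the ``locality'' identity you isolate, $(h\diamond f)(x)=(h\diamond f(x))(x)$, is exactly the identity the paper's one-line proof cites, and your two directions (specializing to constants, then reducing the general case pointwise to the constant case) are just the details the paper leaves implicit.
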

	\begin{proof}
	The result follows from the identity 
	$$(h\diamond f)(x)=(h\diamond {f(x)})(x), \text{ for all } f\in \mathcal{F}(X) \text{ and } x\in X.$$
	\end{proof}	
	As a consequence of this lemma, we have the following result.  
	\begin{theorem}\label{(T)Thetwistedring}
	Equipped with the left skew product, the additive group $\mathcal{S}(X)$  is a ring with identity. 
	\end{theorem}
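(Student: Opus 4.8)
The plan is to verify the ring axioms one at a time, leaning on what has already been established. By Lemma~\ref{(L)propertiesofsproduct} the triple $(\mathcal{F}(X),+,\diamond)$ already satisfies associativity of $\diamond$, the right distributive law, and has the constant function $1$ as a two-sided identity; moreover $1\in\mathcal{S}(X)$ (by Proposition~\ref{(P)constantskewfunction}, or directly since $a\diamond b=ab$ for constants). So the only points that genuinely require argument are: (i) $\mathcal{S}(X)$ is an additive subgroup of $\mathcal{F}(X)$; (ii) $\mathcal{S}(X)$ is closed under $\diamond$; (iii) the left distributive law holds on $\mathcal{S}(X)$. Once these are in place, all ring axioms for $(\mathcal{S}(X),+,\diamond)$ are immediate.

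For (i), I would first note that the zero function is skew convex, so $\mathcal{S}(X)\neq\emptyset$. Given $f,g\in\mathcal{S}(X)$ and $a,b\in K$, the right distributive law gives $(f+g)\diamond(a+b)=f\diamond(a+b)+g\diamond(a+b)$; expanding each summand by skew convexity of $f$ and of $g$, and then regrouping (this uses commutativity of addition in $K$), produces $(f+g)\diamond a+(f+g)\diamond b$, so $f+g\in\mathcal{S}(X)$. For additive inverses I would write $-f=(-1)\diamond f$ and use associativity:
$$(-f)\diamond(a+b)=(-1)\diamond\bigl(f\diamond(a+b)\bigr)=(-1)\diamond\bigl(f\diamond a+f\diamond b\bigr),$$
and since $(-1)\diamond h=-h$ pointwise this equals $(-f)\diamond a+(-f)\diamond b$. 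Hence $\mathcal{S}(X)$ is an additive subgroup.

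For (ii), let $f,g\in\mathcal{S}(X)$ and $a,b\in K$. Using associativity (Lemma~\ref{(L)propertiesofsproduct}(3)) and then skew convexity of $g$,
$$(f\diamond g)\diamond(a+b)=f\diamond\bigl(g\diamond(a+b)\bigr)=f\diamond\bigl(g\diamond a+g\diamond b\bigr).$$
The key observation is that $g\diamond a$ and $g\diamond b$ are \emph{arbitrary} elements of $\mathcal{F}(X)$, not necessarily constants; but since $f$ is skew convex, Lemma~\ref{(L)leftdist} applies to these two functions and yields $f\diamond(g\diamond a+g\diamond b)=f\diamond(g\diamond a)+f\diamond(g\diamond b)$, which by associativity equals $(f\diamond g)\diamond a+(f\diamond g)\diamond b$. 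Thus $f\diamond g\in\mathcal{S}(X)$. Finally, for (iii), if $f,g,h\in\mathcal{S}(X)$ then in particular $f$ is skew convex, so Lemma~\ref{(L)leftdist} gives at once $f\diamond(g+h)=f\diamond g+f\diamond h$.

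The one subtlety I expect to flag in the write-up is step (ii): a direct attempt to ``expand $f\diamond g$'' fails because $\diamond$ is not left distributive in general, so one must route through associativity precisely so that left distributivity is only ever invoked with a skew-convex first argument --- exactly the hypothesis of Lemma~\ref{(L)leftdist}. Everything else is routine bookkeeping with the near-ring identities of Lemma~\ref{(L)propertiesofsproduct}.
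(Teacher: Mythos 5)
Your proof is correct and follows essentially the same route as the paper: the paper simply cites Lemma~\ref{(L)leftdist} together with the general fact that in any right near-ring the set of left-distributive elements forms a ring, and your argument is precisely that general fact unpacked for $\mathcal{F}(X)$ (with the crucial use of Lemma~\ref{(L)leftdist} in step (ii) correctly identified). No gaps.
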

	\begin{proof}
		The result follows from Lemma \ref{(L)leftdist} and the  general fact that in any right near-ring $R$, the set $$\{r\in R\,|\, r(s_1+s_2)=rs_1+rs_2\text{ for all } s_1,s_2\in R\},$$
		is a ring. 
	\end{proof}
	We call $\mathcal{S}(X)$, equipped with pointwise addition and the skew product,  \textit{the ring of skew-convex functions} on $X$ determined by the action of $K^*$ on $X$. We now give some examples of skew-convex function rings. 
	\begin{example}
		If the action of $K^*$ on $X$ is trivial, the ring $\mathcal{S}(X)$ is just the familiar ring of all functions $f\colon  X\to K$ equipped with pointwise addition and pointwise multiplication. 
	\end{example}
	The following example justifies the terminology and explains the link between skew-convex function rings and skew polynomial rings. For an introduction to skew polynomial rings, we refer the reader to \cite{Freeidealrings}. 
	\begin{example}\label{(Ex)(s,delta)-skewmaps}
		Let $\s\colon  K\to K$ be an endomorphism and $\delta\colon  K\to K$ be a $\s$-derivation. Let $K[T;\s,\delta]$ denote the ring of skew polynomials determined by $\s$  and $\delta$. Every nonzero element of $K[T;\s,\delta]$ can uniquely be written as $\sum_{m=0}^na_mT^m$ where $a_m\in K$ with $a_n\neq 0$. The identity $Ta=\s(a)T+\delta(a)$, where $a\in K$, holds in $K[T;\s,\delta]$. Following \cite{LamLeory1988}, we consider  the $(\s,\delta)$-action of $K^*$ on $K$, that is, 
		\begin{equation}\label{(E)conjugate}
			^{b}a=\s(b)ab^{-1}+\delta(b)b^{-1}.
		\end{equation} 
		The ring of skew-convex functions determined by the $(\s,\delta)$-action  is denoted by $K[\s,\delta]$. One can verify that there exists a unique ring homomorphism $$K[T;\s,\delta]\to K[\s,\delta],$$ which sends each $a\in K$ to the constant function $a$, and $T$ to the identity function $id\colon  K\to K$. In particular,  every skew polynomial $P(T)\in K[T;\s,\delta]$ can, under this homomorphism, be considered as a skew-convex function $P\colon  K\to K$. The reader can  verify that the value $P(a)$ of $P$  at $a\in K$ coincides with the evaluation map introduced in \cite{LamLeory1988}, that is, $P(a)$ is the unique element of $K$ for which we have
		$$P(T)-P(a)\in K[T;\s,\delta](T-a).$$ 
		Let us remark that the product formula for evaluating products of skew polynomials is an important consequence of the existence of the above ring homomorphism.   
	\end{example} 
\begin{example}\label{(Ex)regularaction}
	Consider the left regular action of $K^*$ on $K^*$, i.e., ${}^{b}a=ba$. The reader can easily verify that a function $f:K^*\to K$ is skew-convex (with respect to the left regular action) iff there exists a group homomorphism $\phi_f:(K,+)\to (K,+)$ such that $f(x)=\phi_f(x)x^{-1}$ for all $x\in K^*$.   It is straightforward to check that the assignment $f\mapsto \phi_f$ establishes an isomorphism between the ring of skew-convex functions on $K^*$ and the endomorphism ring $End(K,+)$. For a more general result, see Proposition \ref{(P)transitive}. 
\end{example}
	
\end{subsection}%%%%%%%%%%%%%%%%%%%%%%%%%%%%%%%%%%%%%%%%%%%%%%%%%%%%%%%%%%%%%%%%%   
%%%%%%%%%%%%%%%%%%%%%%%%%%%%%%%%%%%%%%%%%%%%%%%%%%%%%%%%%%%%%%%%% 
%%%%%%%%%%%%%%%%%%%%%%%%%%%%%%%%%%%%%%%%%%%%%%%%%%%%%%%%%%%%%%%%% 
\begin{subsection}{Structure of skew-convex function rings}
We begin with a result regarding homomorphisms between skew-convex function rings for which we need some preliminaries.  
Let $K$ be a skew field, and $X,Y$ be (nonempty) sets on which $K^*$ acts (on the left). For a map $\phi\colon  X\to Y$, let  $\phi^*\colon  \mathcal{F}(Y)\to \mathcal{F}(X)$ denote the pullback map  $\phi^*(f)=f\circ \phi$.  Recall that a map $\phi\colon  X\to Y$ is called \textit{action-preserving} if  $\phi(^{a}x)={}^{a}\phi(x)$ for all $a\in K^*$ and $x\in X$.
\begin{proposition}\label{(P)actionpreserving}
	Assume that $\phi\colon  X\to Y$ is action-preserving. Then, for any function $f\in \mathcal{S}(Y)$, we have $\phi^*(f)\in \mathcal{S}(X)$. Moreover, the map 
	$\phi^*\colon  \mathcal{S}(Y)\to \mathcal{S}(X)$ is a homomorphism of rings. 
\end{proposition}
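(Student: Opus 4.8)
The plan is to first verify that $\phi^*$ preserves skew-convexity, and then check that it respects the two ring operations (pointwise addition and the skew product). Throughout, the key computational tool is the definition \eqref{(D)product} of the skew product together with the action-preserving hypothesis $\phi({}^a x) = {}^a\phi(x)$ for $a \in K^*$.

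\medskip
\noindent\textbf{Step 1 (skew-convexity is preserved).} Let $f \in \mathcal{S}(Y)$ and set $g = \phi^*(f) = f \circ \phi$. I must show $g \diamond (a+b) = g\diamond a + g \diamond b$ for all $a,b \in K$. Fix $x \in X$. Since $a+b$ and $a$ and $b$ are constant functions, formula \eqref{(D)product} gives $(g \diamond c)(x) = g({}^{c}x)\,c$ when $c \neq 0$ and $0$ when $c = 0$, for each constant $c \in K$. Using $g = f \circ \phi$ and the action-preserving property, $g({}^{c}x) = f(\phi({}^{c}x)) = f({}^{c}\phi(x))$, so $(g \diamond c)(x) = (f \diamond c)(\phi(x))$ for every constant $c$ (the case $c=0$ being trivial on both sides). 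Hence
$$(g \diamond (a+b))(x) = (f \diamond (a+b))(\phi(x)) = (f \diamond a)(\phi(x)) + (f \diamond b)(\phi(x)) = (g \diamond a)(x) + (g \diamond b)(x),$$
where the middle equality is exactly the skew-convexity of $f$ at the point $\phi(x)$. Since $x$ was arbitrary, $g \in \mathcal{S}(X)$.

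\medskip
\noindent\textbf{Step 2 ($\phi^*$ is additive and unital).} Additivity is immediate from the pointwise definition of $+$: $(\phi^*(f+g))(x) = (f+g)(\phi(x)) = f(\phi(x)) + g(\phi(x)) = (\phi^*(f) + \phi^*(g))(x)$. The identity of $\mathcal{S}(Y)$ is the constant function $1$ (Lemma \ref{(L)propertiesofsproduct}(1)), and its pullback is the constant function $1$ on $X$, which is the identity of $\mathcal{S}(X)$.

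\medskip
\noindent\textbf{Step 3 (multiplicativity).} This is the main point to get right. Let $f, g \in \mathcal{S}(Y)$ and fix $x \in X$; write $y = \phi(x)$. I want $\phi^*(f \diamond g)(x) = (\phi^*(f) \diamond \phi^*(g))(x)$, i.e. $(f \diamond g)(\phi(x)) = ((f\circ\phi) \diamond (g \circ \phi))(x)$. Consider the two cases of \eqref{(D)product} applied to the right-hand side, governed by whether $(g \circ \phi)(x) = g(\phi(x)) = g(y)$ is zero. If $g(y) = 0$, both sides are $0$ directly from \eqref{(D)product}. If $g(y) \neq 0$, set $b = g(y) = (g\circ\phi)(x) \in K^*$. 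Then the right-hand side equals $(f \circ \phi)({}^{b}x) \cdot b = f(\phi({}^{b}x)) \cdot b = f({}^{b}\phi(x)) \cdot b = f({}^{b}y) \cdot b$, using the action-preserving hypothesis in the crucial middle step; and this is precisely $(f \diamond g)(y) = (f\diamond g)(\phi(x))$, the left-hand side. Thus $\phi^*(f \diamond g) = \phi^*(f) \diamond \phi^*(g)$, completing the proof.

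\medskip
\noindent The only genuine obstacle is bookkeeping in Step 3 — one must make sure the point at which $f$ gets evaluated on the right-hand side, namely $\phi({}^{b}x)$ with $b$ determined by $g$, matches the point ${}^{b}y$ with $b$ determined by $g\circ\phi$ on the left; this is exactly what "action-preserving" buys us, and nothing deeper is needed.
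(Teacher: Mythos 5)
Your proof is correct and follows essentially the same route as the paper: the identity $(f\circ\phi)\diamond a=(f\diamond a)\circ\phi$ for constants $a\in K$, which you derive in Step~1, is exactly the key identity the paper's proof cites, and your Steps~2--3 just spell out the "straightforward" remainder (additivity, unitality, and multiplicativity via $\phi(^{b}x)={}^{b}\phi(x)$). Nothing is missing.
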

\begin{proof}
	The fact that $\phi^*(f)\in \mathcal{S}(X)$, for any $f\in \mathcal{S}(Y)$, follows from the identity $$(f\circ \phi)\diamond a=(f\diamond a)\circ \phi, \text{ where } a\in K.$$ 
	The rest of the proof is straightforward. 
\end{proof}
The following result reduces the problem of classifying skew-convex function rings to the case of transitive actions, i.e., actions with a single orbit. 
\begin{proposition}\label{(P)structure}
	Let $X_i, i\in I,$ be the family of all orbits in $X$. Then, the ring $\mathcal{S}(X)$ is isomorphic to  the direct product of the rings $\mathcal{S}(X_i), i\in I$. 
\end{proposition}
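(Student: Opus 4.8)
The plan is to exhibit the isomorphism directly via the family of restriction maps. Since each orbit $X_i$ is an invariant subset of $X$, the inclusion $\iota_i\colon X_i\hookrightarrow X$ is action-preserving, so Proposition \ref{(P)actionpreserving} gives ring homomorphisms $\iota_i^*\colon \mathcal{S}(X)\to \mathcal{S}(X_i)$, $f\mapsto f|_{X_i}$. Assembling these yields a ring homomorphism
$$\Phi\colon \mathcal{S}(X)\to \prod_{i\in I}\mathcal{S}(X_i),\qquad \Phi(f)=(f|_{X_i})_{i\in I},$$
and the task reduces to showing $\Phi$ is a bijection.

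First I would check injectivity: if $f|_{X_i}=0$ for every $i$, then since the orbits partition $X$ (every $x\in X$ lies in a unique orbit, namely the orbit generated by $x$ under $K^*$), $f$ vanishes identically, so $f=0$. Next, surjectivity: given $(g_i)_{i\in I}$ with $g_i\in\mathcal{S}(X_i)$, define $g\colon X\to K$ by $g(x)=g_i(x)$ where $X_i$ is the unique orbit containing $x$. This $g$ is well-defined and restricts to $g_i$ on each $X_i$; it remains to verify $g\in\mathcal{S}(X)$, i.e., $g\diamond(a+b)=g\diamond a+g\diamond b$ for all $a,b\in K$. The key observation is that the skew product is computed \emph{pointwise} in the sense of Lemma \ref{(L)leftdist}: $(g\diamond c)(x)=(g\diamond c(x))(x)$, and more importantly the value $(g\diamond c)(x)$ depends only on $x$ and on the values of $g$ at the points $\,^{c}x$ (when $c\neq 0$), all of which lie in the same orbit as $x$. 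Hence for $x\in X_i$ the identity $g\diamond(a+b)=g\diamond a+g\diamond b$ at $x$ is exactly the corresponding identity for $g_i$ on $X_i$, which holds because $g_i$ is skew convex. Therefore $g\in\mathcal{S}(X)$ and $\Phi(g)=(g_i)_i$.

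Finally, $\Phi$ is clearly additive (pointwise addition restricts to pointwise addition on each orbit), multiplicative (this is what Proposition \ref{(P)actionpreserving} gives for each $\iota_i^*$, or can be seen directly from the orbit-local nature of $\diamond$), and sends the constant function $1$ to $(1)_{i\in I}$; so it is a ring isomorphism. The only point requiring genuine care is the verification that the glued function $g$ is skew convex — this is where one must use that the skew product does not "see" outside the orbit of the evaluation point, so that skew convexity is an orbit-local property; everything else is routine bookkeeping about the orbit partition and the functoriality already established in Proposition \ref{(P)actionpreserving}.
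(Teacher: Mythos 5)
Your proposal is correct and follows essentially the same route as the paper, which simply declares that the restriction map $f\mapsto (f|_{X_i})_{i\in I}$ is ``easy to check'' to be an isomorphism; you have just supplied the details, correctly isolating the one nontrivial point that the skew product at $x$ only involves values of the function on the orbit of $x$, so skew convexity can be checked orbit by orbit.
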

\begin{proof}
	For any function $f:X\to K$, let $f_i$ denote the restriction of $f$  to the set $X_i$. It is easy to check that the assignment $f\mapsto (f_i)_{i\in I}$ gives an isomorphism between  $\mathcal{S}(X)$ and the direct product $\prod_{i\in I} \mathcal{S}(X_i)$. 
\end{proof}	
Any transitive action of $K^*$ is isomorphic to a left regular action in the sense that it is of  the form 
$${}^{a}(bG)=(ab)G,$$
where $G$ is a subgroup of $K^*$ and $K^*/G=\{bG|\,b\in K^*\}$ is the set of all left cosets of $G$ in $K^*$. A map $\phi:K\to K$ is called \textit{right $G$-linear} if $\phi(a-b)=\phi(a)-\phi(b)$ and $\phi(ac)=\phi(a)c$, for all $a,b\in K$ and $c\in G$.  The following proposition gives a characterization of skew-convex function rings for the case of transitive actions. 
\begin{proposition}\label{(P)transitive}
	Let $G$ be a subgroup of $K^*$ and consider the left regular action of $K^*$ on  $K^*/G$. Then:\\
	(1) A function $f:K^*/G\to K$ is skew convex iff there exists a (unique) right $G$-linear map $\phi_f:K\to K$ such that $f(xG)=\phi_f(x)x^{-1}$ for all $x\in K^*$. \\
	(2) The assignment $f\mapsto \phi_f$ establishes an isomorphism between $\mathcal{S}(K^*/G)$ and the  endomorphism ring $End(K_G)$ of right $G$-linear operators on $K$. 
\end{proposition}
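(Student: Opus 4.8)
The plan is to establish (1) directly and then read off (2) by routine checks. First I would unwind what skew convexity means for this action. Since ${}^{a}(bG)=(ab)G$, for a nonzero constant $c\in K$ we have $(f\diamond c)(xG)=f\bigl((cx)G\bigr)c$, while $f\diamond 0$ is the zero function. Evaluating the identity $f\diamond(a+b)=f\diamond a+f\diamond b$ at the coset $G$ — and noting that evaluating it at an arbitrary coset $zG$ gives nothing new, after the substitution $a\mapsto az$, $b\mapsto bz$ and right multiplication by $z$ — shows that $f$ is skew convex if and only if
\begin{equation*}
	f\bigl((a+b)G\bigr)(a+b)=f(aG)a+f(bG)b\quad\text{for all }a,b\in K^*\text{ with }a+b\neq 0,
\end{equation*}
together with $f\bigl((-a)G\bigr)=f(aG)$ for all $a\in K^*$, the latter obtained by taking $b=-a$ and using $f\diamond 0=0$.

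For the forward direction of (1), I would define $\phi_f\colon K\to K$ by $\phi_f(x)=f(xG)x$ for $x\in K^*$ and $\phi_f(0)=0$. The relation $\phi_f(xc)=\phi_f(x)c$ for $c\in G$ is immediate because $(xc)G=xG$, and additivity of $\phi_f$ is precisely the pair of displayed identities above, once one disposes of the trivial cases $x=0$ and $y=0$. Uniqueness is clear since $\phi_f$ is forced on $K^*$. Conversely, given a right $G$-linear $\phi$, right $G$-linearity guarantees that $f(xG):=\phi(x)x^{-1}$ is independent of the coset representative, and a short computation (unwinding $f\diamond(a+b)$ at an arbitrary coset exactly as above and using $\phi(ax+bx)=\phi(ax)+\phi(bx)$) shows $f$ is skew convex. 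This also exhibits $f\mapsto\phi_f$ and $\phi\mapsto\bigl(xG\mapsto\phi(x)x^{-1}\bigr)$ as mutually inverse bijections between $\mathcal{S}(K^*/G)$ and $End(K_G)$.

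For (2) it then remains to check that $f\mapsto\phi_f$ respects the ring operations. Additivity, $\phi_{f+g}=\phi_f+\phi_g$, is immediate from the formula $\phi_f(x)=f(xG)x$. For multiplicativity I would verify $\phi_{f\diamond g}=\phi_f\circ\phi_g$: for $x\in K^*$ with $c:=g(xG)\neq 0$ one has $cx=\phi_g(x)$, hence
\begin{equation*}
	(f\diamond g)(xG)=f\bigl((cx)G\bigr)c=\phi_f(cx)(cx)^{-1}c=\phi_f\bigl(\phi_g(x)\bigr)x^{-1},
\end{equation*}
so $\phi_{f\diamond g}(x)=\phi_f(\phi_g(x))$; when $g(xG)=0$ we get $\phi_g(x)=0$ and both sides vanish, and $x=0$ is trivial. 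Finally the constant function $1$ has $\phi_1=id$, so $f\mapsto\phi_f$ is a ring isomorphism onto $End(K_G)$. The one point requiring a little care is the identity $f\bigl((-a)G\bigr)=f(aG)$ (equivalently, additivity of $\phi_f$ across $y=-x$): when $-1\notin G$ it is not visible from a single coset and must be extracted from skew convexity via $b=-a$ and $f\diamond 0=0$; everything else is bookkeeping.
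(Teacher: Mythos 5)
Your proposal is correct and follows the same route as the paper: define $\phi_f(x)=f(xG)x$ (with $\phi_f(0)=0$) and check that skew convexity of $f$ corresponds exactly to right $G$-linearity of $\phi_f$, then verify $\phi_{f\diamond g}=\phi_f\circ\phi_g$ for part (2). The paper leaves these verifications to the reader; you have simply supplied them, including the genuinely non-obvious point that the case $b=-a$ yields $f((-a)G)=f(aG)$.
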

\begin{proof}
	Given a function $f:K^*/G\to K$, we define $\phi_f:K\to K$ as follows
	$$\phi_f(x)=
	\begin{cases}
		f(xG)x& \text{if } x\neq 0,\\
		0 & \text{if } x=0.
	\end{cases}
	$$
	It is straightforward to check that $f$ is skew-convex iff $\phi_f$ is right $G$-linear. The easy proof of (2) is left to the reader. 
\end{proof}   	
We end this section with a remark.
\begin{remark}\label{(Re)isomorphism}
	Keeping the notations as in Example \ref{(Ex)(s,delta)-skewmaps}, let $a\in K$ be fixed. The $(\s,\delta)$-conjugacy class of $a$, that is, the set 
	$$\Delta^{\s,\delta}(a)\colonequals \{\,{}^{b}a\, |\,b\in K^*\},$$
	is an invariant subset of $K$, and  the $(\s,\delta)$-action   is transitive on $\Delta^{\s,\delta}(a)$. One can show that the set 
	$$C^{\s,\delta}(a)\colonequals \{b\in K^*|\,{}^{b}a=a\}\cup\{0\},$$
	is a skew subfield of $K$ (see Lemma 3.2 in \cite{LamLeory1988}). An application of Proposition \ref{(P)transitive} reveals that the  ring of skew-convex functions on $\Delta^{\s,\delta}(a)$ is isomorphic to the ring of right $C^{\s,\delta}(a)$-linear operators on $K$. This isomorphism has implicitly been used in the proof of  \cite[Proposition 3.16]{LamLeory1988}. In particular, we obtain a ring homomorphism 
	$$\lambda:K[T;\s,\delta]\to End(K_{C^{\s,\delta}(a)}).$$
	This ring homomorphism coincides with the homomorphism $\Lambda_a$ introduced and studies in \cite{leroy2012noncommutative} (see Corollary 1.13 in the reference). We also note that $\lambda(P(T))=\lambda_{P,a}$ where $\lambda_{P,a}$ is the so-called $\lambda$-transform defined in Definition 4.10 of \cite{MR2025912}.
\end{remark}	
\end{subsection}
%%%%%%%%%%%%%%%%%%%%%%%%%%%%%%%%%%%%%%%%%%%%%%%%%%%%%%%%%%%%%%%%%   
%%%%%%%%%%%%%%%%%%%%%%%%%%%%%%%%%%%%%%%%%%%%%%%%%%%%%%%%%%%%%%%%% 
%%%%%%%%%%%%%%%%%%%%%%%%%%%%%%%%%%%%%%%%%%%%%%%%%%%%%%%%%%%%%%%%% 
\begin{subsection}{Skew-invertible  functions}

	As before, let $K$ be a skew field and $X$ be a set on which $K^*$ acts. A function $f\in \mathcal{F}(X)$ is called \textit{skew invertible} if it is invertible with respect to the skew product, in which case, the inverse of $f$ with respect to the skew product is called its \textit{skew inverse} and denoted by $f^{\langle-1\rangle}$. 
	\begin{lemma}\label{(L)Inverseofc-invariant}
		 If $f\in \mathcal{S}(X)$ is skew invertible, then the skew inverse of $f$ belongs to $\mathcal{S}(X)$.   
	\end{lemma}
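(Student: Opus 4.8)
The plan is to exploit the fact, established in Theorem \ref{(T)Thetwistedring}, that $\mathcal{S}(X)$ is a \emph{ring} (associative, with identity $1$) under the skew product, so that the statement reduces to an elementary ring-theoretic observation about units. Concretely, suppose $f \in \mathcal{S}(X)$ is skew invertible, meaning there is some $g \in \mathcal{F}(X)$ with $f \diamond g = g \diamond f = 1$. I would first argue that such a two-sided inverse $g$ is unique as an element of $\mathcal{F}(X)$: if $g' $ also satisfies $f \diamond g' = g' \diamond f = 1$, then by associativity (Lemma \ref{(L)propertiesofsproduct}(3)) $g' = g' \diamond 1 = g' \diamond (f \diamond g) = (g' \diamond f) \diamond g = 1 \diamond g = g$. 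So $f^{\langle -1\rangle}$ is a well-defined element of $\mathcal{F}(X)$, and it suffices to show it lies in $\mathcal{S}(X)$.

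Next I would locate a candidate inverse \emph{inside} $\mathcal{S}(X)$ and show it coincides with $g$. The key point is that, since $\mathcal{S}(X)$ is a ring with identity containing $f$, and $f$ has a left inverse and a right inverse in the larger near-ring $\mathcal{F}(X)$, one wants to know whether these inverses already belong to $\mathcal{S}(X)$. The cleanest route: show that \emph{left} multiplication by a skew-convex element, $h \mapsto f \diamond h$, is an additive group endomorphism of $\mathcal{S}(X)$ (this is immediate from Lemma \ref{(L)leftdist} together with the right-distributivity of Lemma \ref{(L)propertiesofsproduct}(2) — indeed $f \diamond (\mathcal{S}(X))$ lands in $\mathcal{S}(X)$ once we check skew-convexity is preserved under skew products of skew-convex functions, which follows from associativity: $(f\diamond g)\diamond(a+b) = f\diamond(g\diamond(a+b)) = f\diamond(g\diamond a + g \diamond b) = f\diamond(g\diamond a) + f\diamond(g\diamond b)$ using left-distributivity of $f$ and then of $g$). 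Thus $\mathcal{S}(X)$ is closed under $\diamond$, confirming it is genuinely a subring-like object, and the question is really about invertibility within this ring.

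The core argument is then: in the ring $\mathcal{S}(X)$, is $f$ a unit? It has a left inverse and a right inverse in $\mathcal{F}(X)$; I need these to live in $\mathcal{S}(X)$. For the right inverse, consider the constant functions: for any $c \in K$, $f \diamond c$ is skew convex, and the map $c \mapsto f \diamond c$ from $K$ to $\mathcal{S}(X)$ is additive by skew-convexity of $f$. If $f$ is skew invertible with inverse $g$, then evaluating the identity $f \diamond g = 1$ and using the pointwise identity from the proof of Lemma \ref{(L)leftdist}, namely $(f \diamond g)(x) = (f \diamond g(x))(x)$, we get $(f \diamond g(x))(x) = 1$ for every $x$; similarly $g(x) = (g \diamond f(x))$-type relations pin down $g(x)$ in terms of the value $f(x)$ and the map $c \mapsto f\diamond c$. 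The plan is to show that the function $x \mapsto g(x)$ is forced to be skew convex by rewriting it via these pointwise inversion formulas together with the fact that $f$ restricted to each orbit has an inverse behavior governed by an additive (indeed, by Proposition \ref{(P)transitive}, right $G$-linear) map. Because everything decomposes over orbits (Proposition \ref{(P)structure}), it is enough to treat a single transitive orbit $X = K^*/G$, where $\mathcal{S}(X) \cong \mathrm{End}(K_G)$ as rings; under this isomorphism $f$ corresponds to some right $G$-linear operator $\phi_f$, skew invertibility of $f$ corresponds to $\phi_f$ being a bijective (hence invertible) right $G$-linear operator, and its inverse $\phi_f^{-1}$ is automatically right $G$-linear, hence corresponds to a skew-convex function — which must be $f^{\langle -1\rangle}$ by uniqueness of inverses.

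I expect the main obstacle to be a subtle point of quantifier order: "skew invertible" in the definition only asserts a two-sided inverse exists in $\mathcal{F}(X)$, so before invoking the endomorphism-ring picture I must make sure that a function $g \in \mathcal{F}(X)$ with $f \diamond g = g \diamond f = 1$ forces, orbit by orbit, the corresponding operator identity $\phi_f \phi_g = \phi_g \phi_f = \mathrm{id}$ in $\mathrm{End}(K_G)$ — in particular that $g$ itself is already constrained enough to be recovered from a right $G$-linear operator. This is where the pointwise identity $(h \diamond f)(x) = (h \diamond f(x))(x)$ and the explicit bijection $f \leftrightarrow (x \mapsto \phi_f(x)x^{-1})$ of Proposition \ref{(P)transitive} do the real work: one checks that $g$ must equal the skew-convex function attached to $\phi_f^{-1}$, and then $g \in \mathcal{S}(X)$ as desired. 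Assembling the orbitwise conclusions via Proposition \ref{(P)structure} completes the proof.
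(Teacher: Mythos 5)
Your argument is correct, but it takes a genuinely different route from the paper's. The paper's proof is a two-line formal argument: by Lemma \ref{(L)leftdist}, $\mathcal{S}(X)$ is exactly the set of \emph{left-distributive} elements of the right near-ring $(\mathcal{F}(X),+,\diamond)$, and in any right near-ring that set is closed under taking inverses, since if $t$ is the inverse of a left-distributive $r$ then $t(s_1+s_2)=t\bigl(r(ts_1)+r(ts_2)\bigr)=t\bigl(r(ts_1+ts_2)\bigr)=ts_1+ts_2$. You instead decompose $X$ into orbits (Proposition \ref{(P)structure}), identify $\mathcal{S}$ of a transitive orbit with $\mathrm{End}(K_G)$ (Proposition \ref{(P)transitive}), and observe that the inverse of a bijective right $G$-linear operator is right $G$-linear; you correctly flag and resolve the one genuine subtlety, namely that the a priori inverse $g$ lives only in $\mathcal{F}(X)$, so one must verify $\phi_g=\phi_f^{-1}$ directly (which works because $\phi_{f\diamond g}=\phi_f\circ\phi_g$ holds for arbitrary $K^*$-valued $g$, not just skew-convex ones) before concluding $g\in\mathcal{S}(X)$ by uniqueness of two-sided inverses. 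The trade-off: the paper's argument is shorter, needs nothing beyond Lemma \ref{(L)leftdist} and associativity, and avoids any case analysis; yours invokes the heavier structure theory of Section 2.2 but yields more information, namely an explicit identification of the skew inverse as the function attached to the inverse operator $\phi_f^{-1}$, orbit by orbit. Both are valid, and neither is circular since Propositions \ref{(P)structure} and \ref{(P)transitive} precede the lemma in the paper.
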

	\begin{proof}
		By Lemma \ref{(L)leftdist}, we have 
		$$\mathcal{S}(X)=\{f\in \mathcal{F}(X)\,|\, f\diamond(g+h)=f\diamond g+f\diamond h\text{ for all } g,h\in \mathcal{F}(X)\}.$$
		The result follows from this identification and the following general fact whose proof is left to the reader: Let $R$ be a right near-ring and consider the ring 
		$$R'=\{r\in R\,|\, r(s_1+s_2)=rs_1+rs_2\text{ for all } s_1,s_2\in R\}.$$
		If $r\in R'$ is invertible in $R$, then its inverse belongs to $R'$. 
	\end{proof}
	   Next, we give a characterization of skew-invertible functions.
	\begin{lemma}\label{(L)Invertibility} 
		(1)  Let $f\in \mathcal{F}(X)$. There exists $g\in \mathcal{F}(X)$ such that $f\diamond g=1$ if and only if for every $x\in X$, there exists some $a\in K^*$ such that $f({}^ax)=a^{-1}$.\\
		(2) Let $g\in \mathcal{F}(X)$.  There exists $f\in \mathcal{F}(X)$ such that $f\diamond g=1$ if and only if $g(X)\subset K^*$, and the map $x\mapsto {}^{g(x)}x$ is 1-1. \\
		(3) A function $f\in \mathcal{F}(X)$ is skew invertible if and only if $f(X)\subset K^*$ and the assignment $x\mapsto {}^{f(x)}x$ establishes a bijection from $X$ onto $X$. 
	\end{lemma}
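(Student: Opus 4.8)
The plan is to establish (1) and (2) by direct substitution into the defining formula~\eqref{(D)product}, and then to obtain (3) from them using only that $(\mathcal{F}(X),\diamond)$ is a monoid (associativity and the unit $1$ come from Lemma~\ref{(L)propertiesofsproduct}): in a monoid, an element is invertible precisely when it possesses both a left inverse and a right inverse, and these then coincide and give the skew inverse. Throughout, the two facts actually used are that $K^*$ acts on $X$ by bijections (so $^{a}(^{b}x)={}^{ab}x$ and $^{1}x=x$) and that nonzero elements of $K$ are invertible.

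For (1): if $f\diamond g=1$, then for each $x$ the value $(f\diamond g)(x)=1$ is nonzero, which by~\eqref{(D)product} forces $g(x)\neq 0$ and $f({}^{g(x)}x)\,g(x)=1$; setting $a=g(x)$ gives $f({}^{a}x)=a^{-1}$. Conversely, using the axiom of choice pick for each $x$ some $a_x\in K^*$ with $f({}^{a_x}x)=a_x^{-1}$ and put $g(x)=a_x$; then $(f\diamond g)(x)=f({}^{a_x}x)\,a_x=1$. For (2), the forward direction again yields $g(X)\subset K^*$ from $(f\diamond g)(x)=1\neq 0$; to see that $\psi\colon x\mapsto {}^{g(x)}x$ is injective, suppose ${}^{g(x)}x={}^{g(y)}y$, so that $f$ takes a common value $c$ at this point; from $c\,g(x)=1=c\,g(y)$ cancel $c$ on the left to get $g(x)=g(y)$, whence ${}^{g(x)}x={}^{g(x)}y$ and, the action being by bijections, $x=y$. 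For the converse of (2), given $g(X)\subset K^*$ and $\psi$ injective, define $f$ on the image $\psi(X)$ by $f(\psi(x))=g(x)^{-1}$ — this is well defined exactly because $\psi$ is injective — and extend $f$ to $X\setminus\psi(X)$ arbitrarily; then a one-line computation gives $(f\diamond g)(x)=f({}^{g(x)}x)\,g(x)=g(x)^{-1}g(x)=1$.

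For (3): in the monoid $(\mathcal{F}(X),\diamond)$, the function $f$ is skew invertible iff it has a right inverse and a left inverse. Existence of a right inverse is precisely the condition of (1); existence of a left inverse is precisely the condition of (2) read with $f$ in the role of the function called $g$ there, namely $f(X)\subset K^*$ together with injectivity of $\psi_f\colon x\mapsto {}^{f(x)}x$. It then remains to check that the conjunction of ``for every $x$ there is $a\in K^*$ with $f({}^{a}x)=a^{-1}$'' and ``$f(X)\subset K^*$ and $\psi_f$ is injective'' is equivalent to ``$f(X)\subset K^*$ and $\psi_f$ is a bijection of $X$''. The only nonformal point is that the first clause supplies surjectivity of $\psi_f$: given $y\in X$, pick $a$ with $f({}^{a}y)=a^{-1}$ and observe $\psi_f({}^{a}y)={}^{f({}^{a}y)}({}^{a}y)={}^{a^{-1}a}y=y$. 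Conversely, a bijection $\psi_f$ produces such an $a$, namely $a=f(\psi_f^{-1}(y))^{-1}$, by the same identity run backwards.

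The genuinely delicate steps are the well-definedness of $f$ on $\psi(X)$ in the converse of (2), where injectivity of $x\mapsto {}^{g(x)}x$ is used essentially, and the bookkeeping in (3) of which of the two functions plays the role of ``$f$'' and which the role of ``$g$'' when invoking (1) and (2); once those are pinned down, everything reduces to substitution into~\eqref{(D)product}. One could alternatively prove (3) self-containedly by exhibiting the skew inverse $g(y)=f(\psi_f^{-1}(y))^{-1}$ directly and verifying $g\diamond f=f\diamond g=1$ by the same computations, but routing it through (1) and (2) and the monoid fact keeps the argument shortest.
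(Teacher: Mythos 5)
Your proof is correct and follows essentially the same route as the paper: parts (1) and (2) by direct substitution into the defining formula for $\diamond$ (with well-definedness of $f$ on the image of $x\mapsto{}^{g(x)}x$ resting on injectivity, exactly as in the paper's construction), and part (3) assembled from them. The only cosmetic difference is in (3): the paper constructs a left inverse via (2) and then verifies pointwise, using surjectivity, that it is also a right inverse, whereas you check the criteria of both (1) and (2) and invoke the standard monoid fact that a left and a right inverse must coincide --- but the underlying computation (surjectivity of $x\mapsto{}^{f(x)}x$ supplying, for each $y$, an $a\in K^*$ with $f({}^{a}y)=a^{-1}$) is identical.
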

	\begin{proof}
		(1)  The proof is straightforward.\\
		(2)   The proof of the ``if" direction may be left to the reader. To prove the other direction, let $g$ satisfy the stated properties. We define a map $f\colon  X\to K$ as follows: If $y={}^{g(x)}x$ for some $x\in X$, we set $f(y)=g(x)^{-1}$. Otherwise, we set $f(y)=x_0$ where $x_0\in X$ is a fixed element. It is easy to see that $f$ is well-defined and  $f\diamond g=1$. \\
		(3) Assume that $f$ is skew invertible and let $g$ be its skew inverse. By $(2)$,  $f$ is nonzero on $X$ and $x\mapsto {}^{f(x)}x$ is 1-1. The fact that $x\mapsto {}^{f(x)}x$ is onto follows from the following identity
		$${}^{f(^{g(y)}y)}\left( {}^{g(y)}y\right)=y.$$ 
		Conversely, assume that $f$ satisfies the stated properties. By Part $(2)$, there exists $g\in \mathcal{F}(X)$ such that $g\diamond f=1$. We need only  show that $f\diamond g=1$. Given an arbitrary element $x\in X$, we can choose $y\in X$ such that $x={}^{f(y)}y$. We have
		$$(g\diamond f)(y)=1\implies g({}^{f(y)}y)f(y)=1\implies g(x)f(y)=1.$$
		Therefore, we have ${}^{g(x)}x={}^{g(x)f(y)}y=y$ from which it follows that 
		$$g(x)f(y)=1\implies f(y)g(x)=1\implies f({}^{g(x)}x)g(x)=1 \implies (f\diamond g)(x)=1.$$
		Since $x\in X$ was arbitrary, we conclude that $f\diamond g=1$.
	\end{proof}
\begin{remark}
	The map $x\mapsto {}^{f(x)}x$ has also been used in the context of W-polynomials (see the definition of the $\Phi$-transform in  \cite[Definition 4.5]{MR2025912}). 
\end{remark}
	Regarding skew-invertible elements of $\mathcal{S}(X)$, we have the following characterization. 
\begin{proposition}\label{(P)inverserightinverse}
	A skew-convex function $f:X\to K$  is skew invertible if and only if $f(X)\subset K^*$ and for any $x\in X$, there exists some $a\in K^*$ such that $f({}^ax)=a^{-1}$.  
\end{proposition}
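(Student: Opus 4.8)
The plan is to combine the general characterization of skew-invertible functions from Lemma \ref{(L)Invertibility}(3) with the structural facts about skew-convex functions already established. The forward direction is essentially free: if $f$ is skew invertible, then by Lemma \ref{(L)Invertibility}(3) we have $f(X)\subset K^*$ and $x\mapsto {}^{f(x)}x$ is a bijection of $X$; in particular, applying Lemma \ref{(L)Invertibility}(1) (whose stated criterion is exactly "for every $x\in X$ there is $a\in K^*$ with $f({}^{a}x)=a^{-1}$") gives the desired condition, since a skew-invertible $f$ certainly admits a $g$ with $f\diamond g=1$.

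The substantive direction is the converse. Here I would argue that the stated conditions — $f(X)\subset K^*$ together with the existence, for each $x$, of some $a\in K^*$ with $f({}^{a}x)=a^{-1}$ — force $x\mapsto {}^{f(x)}x$ to be a bijection, so that Lemma \ref{(L)Invertibility}(3) applies and $f$ is skew invertible; then Lemma \ref{(L)Inverseofc-invariant} is not even needed for invertibility but confirms the inverse is again skew convex. Surjectivity of $x\mapsto{}^{f(x)}x$ is the immediate consequence of the second hypothesis: given $x$, pick $a$ with $f({}^{a}x)=a^{-1}$ and set $y={}^{a}x$; then ${}^{f(y)}y={}^{a^{-1}}({}^{a}x)=x$. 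The real work is injectivity, and this is where skew convexity must be used — for a general $f\in\mathcal{F}(X)$ the two hypotheses do not imply injectivity, so the proof must genuinely exploit that $f\in\mathcal{S}(X)$.

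For injectivity I would pass to the orbit decomposition via Proposition \ref{(P)structure}, reducing to a single transitive action, and then invoke Proposition \ref{(P)transitive}: writing the action as the left regular action on $K^*/G$ for a subgroup $G\le K^*$, the skew-convex function $f$ corresponds to a right $G$-linear map $\phi_f:K\to K$ with $f(xG)=\phi_f(x)x^{-1}$. Under this dictionary, $x\mapsto {}^{f(x)}x$ becomes, in terms of representatives, $x\mapsto f(xG)x=\phi_f(x)$; the hypothesis $f(X)\subset K^*$ says $\phi_f(x)\neq 0$ for $x\neq 0$, and the surjectivity just established says $\phi_f$ is onto $K$. A surjective additive (indeed right $G$-linear) endomorphism of $K$ with trivial kernel is bijective — this is the crucial point, and it follows because $\phi_f$ being onto with $\phi_f(x)=0\Rightarrow x=0$ makes it an injective and surjective map of sets, hence invertible; one should check the surjectivity onto $K$ rather than onto $K^*$ by noting $\phi_f(0)=0$. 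Thus $\phi_f$ is bijective, so $x\mapsto{}^{f(x)}x$ is a bijection on each orbit, hence on $X$, and Lemma \ref{(L)Invertibility}(3) finishes the proof.

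The main obstacle I anticipate is making the injectivity argument clean without invoking the orbit/transitivity reduction — one wants to show directly that if ${}^{f(x)}x={}^{f(x')}x'$ then $x=x'$, using only skew convexity at the two points $x,x'$ and the hypothesis. In the transitive picture this is transparent (it is the statement that a right $G$-linear self-map of $K$ that is injective is determined on the relevant coset), but phrasing it orbit-by-orbit may require care to ensure that the element $a$ supplied by the hypothesis for a point in one orbit indeed lands in the same orbit, which it does since orbits are invariant. Once that bookkeeping is handled, the remaining steps are routine verifications of the kind the paper has repeatedly left to the reader.
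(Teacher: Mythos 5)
Your proof is correct, and its skeleton --- reduce to showing that $x\mapsto{}^{f(x)}x$ is a bijection, get surjectivity essentially for free from the hypothesis, and concentrate the work in injectivity, which is where skew convexity must enter --- matches the paper's. The injectivity step itself, however, is done by a genuinely different route. The paper argues pointwise: if ${}^{f(x)}x={}^{f(y)}y$ then $y={}^{a}x$ and $f(x)b=f({}^{a}x)a$ for some $b$ fixing $x$; skew convexity yields $f\diamond(-a)=-f\diamond a$ and hence $0=\left(f\diamond(b-a)\right)(x)$, which forces $b=a$ because $f$ never vanishes, so $x=y$. You instead pass through the structure theory: decompose $X$ into orbits (Proposition \ref{(P)structure}), model each orbit as $K^*/G$ (Proposition \ref{(P)transitive}), and note that under the dictionary $f\leftrightarrow\phi_f$ the map becomes $xG\mapsto\phi_f(x)G$, whose injectivity follows from additivity of $\phi_f$ plus the trivial-kernel condition $\phi_f(x)=f(xG)x\neq 0$ for $x\neq 0$, with right $G$-linearity used to descend to cosets. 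Both arguments are sound; the bookkeeping point you flag (the element $a$ supplied by the hypothesis keeps you inside the orbit of $x$, since orbits are invariant) is the only thing needed to legitimize the orbitwise reduction. The paper's argument is self-contained and elementary, while yours leans on Propositions \ref{(P)structure} and \ref{(P)transitive} (whose proofs the paper leaves to the reader) but buys conceptual clarity: skew-invertibility of a skew-convex $f$ is exactly bijectivity of the associated right $G$-linear operator $\phi_f$, and an additive map with trivial kernel is automatically injective.
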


	\begin{proof}
		The ``only if" direction follows from Part 1 of Lemma \ref{(L)Invertibility}. To prove the other direction, we use  Part (2) of Lemma \ref{(L)Invertibility}. Therefore, we need only show that if ${}^{f(x)}x={}^{f(y)}y$, then $x=y$. Let ${}^{f(x)}x={}^{f(y)}y$ for some $x,y\in X$. Then $y$ and $x$ are in the same orbit, implying that there exists $a\in K^*$ such that $y={}^{a}x$. So, 
		$${}^{f(x)}x={}^{f(y)}y={}^{f({}^{a}x)a}x.$$
		It follows that $f(x)b=f({}^ax)a$ for some $b\in K^*$ satisfying ${}^bx=x$. Note that $f({}^{-a}x)=f({}^ax)$ because 
		$$0=f\diamond (a+(-a))=f\diamond a +f\diamond (-a)\implies f\diamond (-a)=-f\diamond a.$$ 
		Therefore, we can write
		$$0=f({}^bx)b-f({}^{-a}x)a=(f\diamond b+f\diamond (-a))(x)=\left( f\diamond (b-a)\right)(x).$$
		It follows that $b=a$, since  $f(X)\subset K^*$. Thus  $x={}^bx={}^ax=y$.
	\end{proof}

\end{subsection}

%%%%%%%%%%%%%%%%%%%%%%%%%%%%%%%%%%%%%%%%%%%%%%%%%%%%%%%%%%%%%%%%
%%%%%%%%%%%%%%%%%%%%%%%%%%%%%%%%%%%%%%%%%%%%%%%%%%%%%%%%%%%%%%%%

%%%%%%%%%%%%%%%%%%%%%%%%%%%%%%%%%%%%%%%%%%%%%%%%%%%%%%%%%%%%%%%%
%%%%%%%%%%%%%%%%%%%%%%%%%%%%%%%%%%%%%%%%%%%%%%%%%%%%%%%%%%%%%%%%
%%%%%%%%%%%%%%%%%%%%%%%%%%%%%%%%%%%%%%%%%%%%%%%%%%%%%%%%%%%%%%%%
%%%%%%%%%%%%%%%%%%%%%%%%%%%%%%%%%%%%%%%%%%%%%%%%%%%%%%%%%%%%%%%%
\end{section} 
%%%%%%%%%%%%%%%%%%%%%%%%%%%%%%%%%%%%%%%%%%%%%%%%%%%%%%%%%%%%%%%%
%%%%%%%%%%%%%%%%%%%%%%%%%%%%%%%%%%%%%%%%%%%%%%%%%%%%%%%%%%%%%%%%

%%%%%%%%%%%%%%%%%%%%%%%%%%%%%%%%%%%%%%%%%%%%%%%%%%%%%%%%%%%%%%%%
%%%%%%%%%%%%%%%%%%%%%%%%%%%%%%%%%%%%%%%%%%%%%%%%%%%%%%%%%%%%%%%%
%%%%%%%%%%%%%%%%%%%%%%%%%%%%%%%%%%%%%%%%%%%%%%%%%%%%%%%%%%%%%%%%
%%%%%%%%%%%%%%%%%%%%%%%%%%%%%%%%%%%%%%%%%%%%%%%%%%%%%%%%%%%%%%%%
\begin{section}{Evaluation of skew rational functions}
	  The method of evaluating skew polynomials introduced in \cite{LamLeory1988} is a natural generalization of evaluation of polynomials in the commutative setting. Therefore, it is of importance to extend the method to  skew rational functions, that is, quotients of skew polynomials. In this section, using the material developed in the previous section, we present a method of evaluating skew rational functions. Let us fix some notations. Throughout this section, $K$ is a skew field,  $\s\colon  K\to K$ is an endomorphism and $\delta\colon  K\to K$ is a $\s$-derivation. We will work with the $(\s,\delta)$-action (see Example \ref{(Ex)(s,delta)-skewmaps}). The $(\s,\delta)$-conjugacy class of an element $a\in K$ is denoted by $\Delta^{\s,\delta}(a)$.  If $a,b\in K$ are $(\s,\delta)$-conjugate, we write $a\sim b$. The notation $a\nsim b$  means  $b\notin \Delta^{\s,\delta}(a)$.
	%%%%%%%%%%%%%%%%%%%%%%%%%%%%%%%%%%%%%%%%%%%%%%%%%%%%%%%%%%%%%%%%   
	%%%%%%%%%%%%%%%%%%%%%%%%%%%%%%%%%%%%%%%%%%%%%%%%%%%%%%%%%%%%%%%%% 
	\begin{subsection}{Basic definitions}
		We begin with some general facts. For more details, see \cite{Freeidealrings}. 	It is known that $K[T;\s,\delta]$ is a left PID, and therefore, a left Ore domain. Its field of fractions (called the skew rational function field) is denoted by $K(T;\s,\delta)$. Every skew rational function can be represented as a left quotient $P(T)^{-1}Q(T)$ for some skew polynomials  $0\neq P(T), Q(T)$, where  $P(T)$ is called the denominator and $Q(T)$ is called the numerator of the quotient. Although such a representation is not unique for a given $f(T)\in K(T;\s,\delta)$, there exists a unique representation  $P(T)^{-1}Q(T)$ of $f(T)$ such that $P(T)$ is monic and has the least possible degree among all representations of $f(T)$. This representation will be called the \textit{minimal representation} of $f(T)$. 
		
		After the above preliminaries, we shall describe a method of evaluating skew rational functions. Since in the commutative setting, a rational function is not defined at the roots of its denominator, some care is needed in evaluating skew rational functions. Therefore, we introduce the following definition. 
		\begin{definition}
			A skew rational function $f(T)\in K(T;\s,\delta)$ is said to be defined at $a\in K$ if 
			the denominator of the minimal representation of $f(T)$ is skew-invertible as a function on the $(\s,\delta)$-conjugacy class $\Delta^{\s,\delta}(a)$ of $a$. 		 
		\end{definition}
		We now define the evaluation of a skew rational function at elements of $K$. Recall that every skew polynomial $P(T)$ can be regarded as a skew-convex function on any invariant set $A\subset K$ (see Example \ref{(Ex)(s,delta)-skewmaps}). By abuse of notation, the skew-convex function associated to $P(T)$ is denoted by $P:A\to K$. 
		\begin{definition}
			Let $f(T)\in K(T;\s,\delta)$ have the minimal representation $P(T)^{-1}Q(T)$. Assume that $f(T)$ is defined at $a\in K$. The value of $f(T)$ at $a$ (denoted by $f(a)$) is defined to be 
			$$f(a)\colonequals \left(P^{\langle-1\rangle}\diamond Q \right)(a),$$
			 where  $P^{\langle-1\rangle}:\Delta^{\s,\delta}(a)\to K$ is the skew-inverse of $P:\Delta^{\s,\delta}(a)\to K$. 		 
		\end{definition}
		It is convenient to introduce one more definition. 
		\begin{definition}
			Let $f(T)\in K(T;\s,\delta)$. The set of all $a\in K$, at which $f(T)$ is defined, is denoted by $dom(f)$, and called the domain of $f(T)$. The function sending $a\in dom(f)$ to $f(a)$ is denoted by $f:dom(f)\to K$.  		 
		\end{definition}
		  It follows from the definitions that if $f(T)\in K(T;\s,\delta)$ is defined at some $a\in K$, then it is also defined at all $c\in \Delta^{\s,\delta}(a)$. Therefore, $dom(f)$ is a $(\s,\delta)$-invariant subset of $K$, for all $f(T)\in K(T;\s,\delta)$. 
		 
		 In general, it is difficult to find the domain of an arbitrary skew rational function. Here, we present a partial result. 
		 Recall that a skew polynomial $P(T)\in K[T;\s,\delta]$ is called semi-invariant if $P(T)K\subset KP(T)$. For a detailed account of semi-invariant skew polynomials, we refer the reader to \cite{lamleroy1988algebraic}.
		 \begin{proposition}\label{(P)semi-invariant}
		 	Assume that $\s:K\to K$ is an automorphism. Let $f(T)\in K(T;\s,\delta)$ have the minimal representation $P(T)^{-1}Q(T)$ such that $P(T)$ is semi-invariant and of degree $n$. Then, for any $a\in K$, $f(T)$ is defined at $a$ iff $P(a)\neq 0$. Furthermore, we have
		 	$$
		 	f(a)=\s^{-n}\left( P({}^{Q(a)}a)\right)^{-1}Q(a) ,\, \text{ for all }\, a\in K \, \text{ such that }\, P(a)\neq 0. 
		 	$$ 
		 \end{proposition}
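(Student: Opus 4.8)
The plan is to distill the hypotheses into one explicit commutation identity for $P$ and then read everything off it. First I would use that the denominator of a minimal representation is always monic; combined with the semi-invariance hypothesis $P(T)K\subset KP(T)$, which gives a unique $c'\in K$ with $P(T)c=c'P(T)$ for each $c$, comparing leading coefficients (the leading coefficient of $P(T)c$ is $\s^{n}(c)$ because $P$ is \emph{monic} of degree $n$, while that of $c'P(T)$ is $c'$) forces
\[
P(T)\,c=\s^{n}(c)\,P(T),\qquad c\in K.
\]
Next I would push this through the evaluation ring homomorphism $K[T;\s,\delta]\to\mathcal S\big(\Delta^{\s,\delta}(a)\big)$ obtained by composing the homomorphism of Example \ref{(Ex)(s,delta)-skewmaps} with the restriction map $\iota^{*}$ of Proposition \ref{(P)actionpreserving} attached to the inclusion of the invariant set $X\colonequals\Delta^{\s,\delta}(a)$ into $K$. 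Since a ring homomorphism turns the polynomial product into the skew product, the displayed identity becomes $P\diamond c=\s^{n}(c)\,P$ as functions on $X$, and evaluating at $x\in X$ through \eqref{(D)product} yields the master relation
\[
P\big({}^{c}x\big)=\s^{n}(c)\,P(x)\,c^{-1},\qquad c\in K^{*},\ x\in X.\qquad(\star)
\]

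From $(\star)$ the first assertion is almost immediate. Since $K^{*}$ acts transitively on $X$, every $x\in X$ equals ${}^{c}a$ for some $c\in K^{*}$, so $P(x)=\s^{n}(c)P(a)c^{-1}$; as $\s$ is injective this shows $P$ vanishes nowhere on $X$ when $P(a)\neq0$ and $P(a)=0$ otherwise. In the latter case $P(X)\not\subset K^{*}$, so $P$ is not skew-invertible on $X$, i.e.\ $f(T)$ is not defined at $a$. In the former case $P|_{X}$ is skew convex, so I would verify the two conditions of Proposition \ref{(P)inverserightinverse}: the condition $P(X)\subset K^{*}$ is what we just observed, and for $x\in X$ the requirement ``$P({}^{b}x)=b^{-1}$ for some $b\in K^{*}$'' reads, via $(\star)$, as $\s^{n}(b)=P(x)^{-1}$, which has the solution $b=\s^{-n}\big(P(x)\big)^{-1}$ \emph{precisely because} $\s$ is an automorphism. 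Hence $P:X\to K$ is skew-invertible and $f(T)$ is defined at $a$.

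For the value, put $y\colonequals{}^{Q(a)}a$. If $Q(a)=0$ then \eqref{(D)product} gives $f(a)=\big(P^{\langle-1\rangle}\diamond Q\big)(a)=0$, which is how the stated formula should be read in this degenerate case. If $Q(a)\neq0$, then $y\in X$ and \eqref{(D)product} gives $f(a)=P^{\langle-1\rangle}(y)\,Q(a)$, so it remains to identify $P^{\langle-1\rangle}(y)$. Writing $b\colonequals P^{\langle-1\rangle}(y)$, which is nonzero because $P^{\langle-1\rangle}$ is itself skew-invertible (Lemma \ref{(L)Invertibility}(3)) and lies in $\mathcal S(X)$ (Lemma \ref{(L)Inverseofc-invariant}), the identity $P\diamond P^{\langle-1\rangle}=1$ evaluated at $y$ gives $P({}^{b}y)\,b=1$; substituting $(\star)$ with $x=y$, $c=b$ yields $\s^{n}(b)P(y)=1$, hence $b=\s^{-n}\big(P(y)\big)^{-1}$. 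Therefore $f(a)=\s^{-n}\big(P({}^{Q(a)}a)\big)^{-1}Q(a)$, as claimed.

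The one genuinely load-bearing step is the derivation of $P(T)c=\s^{n}(c)P(T)$: it is where monicity of the minimal denominator and the semi-invariance hypothesis are both consumed — note that without monicity one would only get a conjugate of $\s^{n}(c)$ by the leading coefficient, and the closed form would be messier — and everything after it is bookkeeping with the product formula and Proposition \ref{(P)inverserightinverse}. The assumption that $\s$ is an automorphism enters exactly twice, both times to solve an equation of the form $\s^{n}(b)=(\text{nonzero element})$ for $b$, so it is essential to the argument as organized here.
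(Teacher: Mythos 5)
Your proposal is correct and follows essentially the same route as the paper: the paper's proof consists precisely of the identity $P(T)x=\s^n(x)P(T)$ (cited from Lam--Leroy rather than rederived via leading coefficients as you do) and its evaluated form $P({}^{x}a)x=\s^n(x)P(a)$, with all remaining details left to the reader. Your write-up simply supplies those details — nonvanishing of $P$ on the conjugacy class, the invertibility criterion of Proposition \ref{(P)inverserightinverse}, and the computation of $P^{\langle-1\rangle}(y)$ — exactly as intended.
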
  
	 \begin{proof}
	 	Let $n$ be the degree of $P(T)$. Fix $a\in K$. For every $x\in K$, we have $P(T)x=\s^n(x)P(T)$ (see \cite[Lemma 2.2]{lamleroy1988algebraic}). Evaluating at $a$, we obtain $P({}^{x}a)x=\s^n(x)P(a)$,  using which one can prove the proposition. The details are left to the reader.  
	 \end{proof} 
 Let us give an example illustrating the proposition. 
 \begin{example}
 	Let $\s$ be an involution and $\delta=0$. Let $b\in K$ belong to the center of $K$. Then, the polynomial $P(T)=T^2+b$ is semi-invariant (see also Example 2.5.(a) in \cite{lamleroy1988algebraic}). By Proposition \ref{(P)semi-invariant}, $f(T)=P(T)^{-1}$ is defined at $a\in K$ iff $\s(a)a+b\neq 0$. Moreover, we have  
 	$$f(a)=(\s(a)a+b)^{-1}, \,\text{ for all } \, a\in K \text{ with }\, \s(a)a+b\neq 0.$$
 \end{example}
		 We end this part with some remarks. 
		\begin{remark}
			Our method of evaluation of skew rational functions has some features not present in the commutative setting. One such feature is that it may happen that $dom(f)$ is empty for every skew rational function not in $K[T;\s,\delta]$. 
			There exist examples of $K[T;\s,\delta]$ in which every irreducible skew polynomial is of degree one, and  $\Delta^{\s,\delta}(a)=K$ for all  $a\in K$. Examples are universal differential fields discovered by Kolchin (\cite{MR58591}). For such examples, we have $dom(f)=\emptyset$, for every skew rational function $f$ not in $K[T;\s,\delta]$.  
		\end{remark}
	The following remarks deal with some equivalent formulations of the above definitions. 
		\begin{remark}
		Working with the notations of Corollary 1.13 in \cite{leroy2012noncommutative}, we can see that $f(T)=P(T)^{-1}Q(T)$ is defined at $a\in K$ iff $P(T_a):V\to V$ is a bijection. Moreover, the value of $f(T)$ at $a\in K$ is 
		$$f(a)=\left( P(T_a)^{-1}\circ Q(T_a)\right)(1)=P(T_a)^{-1}(Q(a)).$$
		This approach has the advantage that one can define evaluation of skew rational functions over arbitrary rings. 
	\end{remark}
\begin{remark}
	Using the  $\lambda$-transform (see Definition 4.10 in \cite{MR2025912}), we can see that $f(T)=P(T)^{-1}Q(T)$ is defined at $a\in K$ iff $\lambda_{P,a}:K\to K$ is a bijection.
\end{remark}
	\end{subsection}
	%%%%%%%%%%%%%%%%%%%%%%%%%%%%%%%%%%%%%%%%%%%%%%%%%%%%%%%%%%%%%%%%%   
	%%%%%%%%%%%%%%%%%%%%%%%%%%%%%%%%%%%%%%%%%%%%%%%%%%%%%%%%%%%%%%%%% 
	%%%%%%%%%%%%%%%%%%%%%%%%%%%%%%%%%%%%%%%%%%%%%%%%%%%%%%%%%%%%%%%%% 
	\begin{subsection}{Evaluation of the skew rational function $(T-b)^{-1}$}
 	This part deals with evaluation of skew rational functions of the form \newline $(T-b)^{-1}$, where $b\in K$. 
			\begin{proposition}\label{(P)T-b}
				Let $a,b\in K$. Then,\\
				(1) $(T-b)^{-1}$ is defined at $a$ iff  $b\nsim a$ and the $(\s,\delta)$-metro equation 
				$$\s(x)c+\delta(x)-bx=1,$$
				has a solution $x\in K$, for all $c\in \Delta^{\s,\delta}(a)$. \\
				(2) If $(T-b)^{-1}$ is defined at $a$, then the value  of $(T-b)^{-1}$ at $a$ is the (unique) solution $x$ of  the $(\s,\delta)$-metro equation 
				$$ \s(x)a+\delta(x)-bx=1.$$
			\end{proposition}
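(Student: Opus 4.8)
The plan is to exploit that $(T-b)^{-1}$ has trivial numerator, so that evaluating it reduces to inverting a single skew-convex function, and then to feed that inversion into Proposition~\ref{(P)inverserightinverse}.

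First I would pin down the minimal representation. Since $\s$ is a nonzero endomorphism of a skew field it is injective, so the degree is additive on $K[T;\s,\delta]$, whence $T-b$ is a non-unit and $(T-b)^{-1}\notin K[T;\s,\delta]$. Thus the minimal representation of $f(T)=(T-b)^{-1}$ is $P(T)^{-1}Q(T)$ with $P(T)=T-b$ (monic of degree $1$) and $Q(T)=1$. By Example~\ref{(Ex)(s,delta)-skewmaps}, the skew-convex function on $\Delta^{\s,\delta}(a)$ attached to $T-b$ is $c\mapsto c-b$; denote it $P\colon\Delta^{\s,\delta}(a)\to K$. Then $f(T)$ is defined at $a$ iff $P$ is skew invertible, and in that case, since $Q=1$ is the unit for $\diamond$ (Lemma~\ref{(L)propertiesofsproduct}(1)), $f(a)=(P^{\langle-1\rangle}\diamond 1)(a)=P^{\langle-1\rangle}(a)$.

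For (1) I would apply Proposition~\ref{(P)inverserightinverse} to $P$. Its first condition, $P(\Delta^{\s,\delta}(a))\subset K^{*}$, reads $c-b\neq 0$ for all $c\sim a$, i.e.\ $b\nsim a$. Its second condition says that for each $c\in\Delta^{\s,\delta}(a)$ there is $d\in K^{*}$ with $P({}^{d}c)=d^{-1}$, i.e.\ ${}^{d}c-b=d^{-1}$; multiplying on the right by $d$ and using ${}^{d}c\cdot d=\s(d)c+\delta(d)$ turns this into $\s(d)c+\delta(d)-bd=1$, the $(\s,\delta)$-metro equation at $c$. As $x=0$ never solves that equation, solvability in $K$ and in $K^{*}$ coincide, and (1) follows. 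For (2), assuming $f(T)$ is defined at $a$, I would set $x\colonequals P^{\langle-1\rangle}(a)$, which lies in $K^{*}$ by Lemma~\ref{(L)Invertibility}(3), and evaluate $P\diamond P^{\langle-1\rangle}=1$ at $a$ to get $P({}^{x}a)\,x=1$, i.e.\ $({}^{x}a-b)x=1$; expanding ${}^{x}a\cdot x=\s(x)a+\delta(x)$ gives $\s(x)a+\delta(x)-bx=1$, so $f(a)=x$ solves the stated metro equation. Uniqueness: if $x_{1},x_{2}$ both solve it, then $y\colonequals x_{1}-x_{2}$ satisfies $\s(y)a+\delta(y)=by$, and $y\neq 0$ would force ${}^{y}a=b$, contradicting $b\nsim a$; hence $x_{1}=x_{2}$.

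The only place needing care is the opening bookkeeping: checking that the minimal representation is genuinely $(T-b)^{-1}\cdot1$ (no monic denominator of smaller degree, and uniqueness of the monic least-degree one) and identifying correctly which function plays the role of $P$, so that Proposition~\ref{(P)inverserightinverse} and the formula $f(a)=(P^{\langle-1\rangle}\diamond Q)(a)$ are invoked on the right object; after that, everything is a mechanical translation through the definition ${}^{d}c=\s(d)cd^{-1}+\delta(d)d^{-1}$ of the $(\s,\delta)$-action and the two cited propositions.
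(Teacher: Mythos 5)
Your proposal is correct and takes essentially the same route as the paper: both reduce the question to skew-invertibility of the function $c\mapsto c-b$ on $\Delta^{\s,\delta}(a)$ via Proposition~\ref{(P)inverserightinverse} (and Lemma~\ref{(L)Invertibility}) and translate the resulting conditions into the $(\s,\delta)$-metro equation. The extra bookkeeping you supply --- verifying that the minimal representation is $(T-b)^{-1}\cdot 1$ and proving uniqueness of the solution in (2) --- is left implicit in the paper but is consistent with its argument.
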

		\begin{proof}
			(1) Using Lemma \ref{(L)Invertibility} and Proposition \ref{(P)inverserightinverse}, we see that $T-b$ is skew invertible as an element of $\mathcal{S}(\Delta^{\s,\delta}(a))$ if and only if (1) $T-b$ does not have a root in $\Delta^{\s,\delta}(a)$, and (2) for any $c\in \Delta^{\s,\delta}(a)$, there exists $x\in K^*$ such that $(T-b)({}^xc)=x^{-1}$. The first condition is equivalent to $b\notin \Delta^{\s,\delta}(a)$. The second condition is equivalent to saying that for any $c\in A$, there exists $x\in K^*$ satisfying the equation 
			${}^xc-b=x^{-1}$, or equivalently, the $(\s,\delta)$-metro equation  $$\s(x)c+\delta(x)-bx=1.$$
			(2) This is a direct consequence of (1) and the definition of the skew product. 
		\end{proof}
	In \cite{MR2025912}, the notion of a $(\s,\delta)$-metro equation is  studied in the context of Wedderburn polynomials (also called W-polynomials).  Using the results of \cite{MR2025912}, we obtain the following criterion.
			\begin{corollary}
					Let $a,b\in K$. Then, $(T-b)^{-1}$ is defined at $a$  iff $b\nsim a$ and $(T-c)(T-b)$ is a W-polynomial for every $c\in \Delta^{\s,\delta}(a)$. In particular, if $\Delta^{\s,\delta}(a)$ is an algebraic $(\s,\delta)$-conjugacy class, then $(T-b)^{-1}$ is defined at $a$, for all  $b\nsim a$.
			\end{corollary}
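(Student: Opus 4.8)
The plan is to deduce this corollary from Proposition \ref{(P)T-b} by reinterpreting the solvability of the $(\s,\delta)$-metro equation in terms of W-polynomials. First I would recall from \cite{MR2025912} the precise link between metro equations and W-polynomials: the product $(T-c)(T-b)$ is a W-polynomial if and only if $b\nsim c$ and the $(\s,\delta)$-metro equation $\s(x)c+\delta(x)-bx=1$ has a solution $x\in K$ (this is essentially the characterization of when a product of two linear factors is Wedderburn, cf.\ the theory of metro equations developed there). Granting that dictionary, the equivalence in Proposition \ref{(P)T-b}(1) translates verbatim: $(T-b)^{-1}$ is defined at $a$ iff $b\nsim a$ and, for every $c\in \Delta^{\s,\delta}(a)$, the metro equation with parameter $c$ is solvable, which is exactly the condition that $(T-c)(T-b)$ is a W-polynomial for all such $c$. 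One should note that $b\nsim a$ already forces $b\nsim c$ for every $c\in \Delta^{\s,\delta}(a)$, since $\Delta^{\s,\delta}(a)=\Delta^{\s,\delta}(c)$, so the ``$b\nsim c$'' clause hidden in the W-polynomial condition is automatic.

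For the ``in particular'' clause I would invoke the fact (again from \cite{MR2025912}, or from \cite{lamleroy1988algebraic}) that when $\Delta^{\s,\delta}(a)$ is an algebraic $(\s,\delta)$-conjugacy class — meaning the elements of the class are roots of a common nonzero skew polynomial, equivalently the associated minimal polynomial of the class exists — every product $(T-c)(T-b)$ with $b\nsim c$ and $c\in\Delta^{\s,\delta}(a)$ is automatically a W-polynomial. The key point is that in the algebraic case the relevant metro equation is always solvable: the map $x\mapsto \s(x)c+\delta(x)-bx$ is, after restricting suitably, surjective precisely because $b$ and $c$ are non-conjugate and the class of $c$ is algebraic (this is where one uses that $(T-c)$ and $(T-b)$ are ``coprime'' in the appropriate sense and generate the whole ring modulo the relevant bound). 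Hence for all $b\nsim a$ the condition of the first part is met, and $(T-b)^{-1}$ is defined at $a$.

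The main obstacle I anticipate is purely expository rather than mathematical: making sure the cited statements from \cite{MR2025912} are quoted with the correct hypotheses, in particular the exact equivalence ``$(T-c)(T-b)$ is a W-polynomial $\iff$ metro equation solvable (plus non-conjugacy)'' and the exact sense in which ``algebraic conjugacy class'' guarantees solvability of every such metro equation. Once those two external facts are pinned down, the proof is a one-line substitution into Proposition \ref{(P)T-b}, together with the elementary observation that conjugacy classes are closed under conjugation so the side conditions propagate across the whole class. I would therefore keep the write-up short: state the dictionary, apply it, and handle the algebraic case by citing the solvability result.

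\begin{proof}
By Proposition \ref{(P)T-b}, $(T-b)^{-1}$ is defined at $a$ iff $b\nsim a$ and, for every $c\in \Delta^{\s,\delta}(a)$, the $(\s,\delta)$-metro equation $\s(x)c+\delta(x)-bx=1$ has a solution $x\in K$. By the theory of Wedderburn polynomials developed in \cite{MR2025912}, for $b\nsim c$ the latter solvability is equivalent to $(T-c)(T-b)$ being a W-polynomial. Since $\Delta^{\s,\delta}(a)=\Delta^{\s,\delta}(c)$ for every $c\in \Delta^{\s,\delta}(a)$, the condition $b\nsim a$ is equivalent to $b\nsim c$ for all such $c$; this yields the stated criterion. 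Finally, if $\Delta^{\s,\delta}(a)$ is an algebraic $(\s,\delta)$-conjugacy class, then for every $c\in\Delta^{\s,\delta}(a)$ and every $b\nsim c$ the product $(T-c)(T-b)$ is a W-polynomial (see \cite{MR2025912}), so the criterion is automatically satisfied for all $b\nsim a$, and $(T-b)^{-1}$ is defined at $a$.
\end{proof}
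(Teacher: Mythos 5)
Your proposal is correct and follows essentially the same route as the paper: the paper's proof likewise combines Proposition \ref{(P)T-b} with the metro-equation characterization of W-polynomials (Theorem 6.6 of \cite{MR2025912}) for the first statement, and cites Corollary 6.7 of \cite{MR2025912} for the algebraic case. Your added remark that $b\nsim a$ propagates to $b\nsim c$ for all $c\in\Delta^{\s,\delta}(a)$ is a correct, if implicit, detail of the same argument.
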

			\begin{proof}
				The first statement follows from Proposition \ref{(P)T-b} and  \cite[Theorem 6.6]{MR2025912}. The second statement follows from \cite[Corollary 6.7]{MR2025912}. 
			\end{proof}
		The following result sheds light on evaluating skew rational polynomials of the type $(T-b)^{-1}$. 
		\begin{proposition}\label{(P)conjugateT-b}
			Let $a,b,c,d\in K$ satisfy $a\sim c$ and $b\sim d$. If $(T-b)^{-1}$ is defined at $a$, then $(T-d)^{-1}$ is defined at $c$. 
		\end{proposition}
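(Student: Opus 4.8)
The plan is to read the claim off from a single conjugation identity in the skew polynomial ring, once the definitions are unwound. Since $a\sim c$, the conjugacy classes agree: write $A\colonequals\Delta^{\s,\delta}(a)=\Delta^{\s,\delta}(c)$. The denominator of the minimal representation of $(T-b)^{-1}$ is $T-b$, and that of $(T-d)^{-1}$ is $T-d$ — a monic degree-one denominator is forced because neither rational function lies in $K[T;\s,\delta]$ — so, by definition, the hypothesis says precisely that the function $T-b\colon A\to K$ is skew invertible, and the goal is to show that $T-d\colon A\to K$ is skew invertible. Finally, $b\sim d$ lets us fix $e\in K^*$ with $d={}^{e}b$.

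The crux is the identity $(T-d)e=\s(e)(T-b)$ in $K[T;\s,\delta]$, which drops out of $Te=\s(e)T+\delta(e)$ together with the fact that $d={}^{e}b$ is equivalent to $de=\s(e)b+\delta(e)$. I would then transport this to the function ring $\mathcal{S}(A)$: the inclusion $A\hookrightarrow K$ is action-preserving, so by Proposition \ref{(P)actionpreserving} restriction $\mathcal{S}(K)\to\mathcal{S}(A)$ is a ring homomorphism; composing with the evaluation homomorphism $K[T;\s,\delta]\to K[\s,\delta]=\mathcal{S}(K)$ of Example \ref{(Ex)(s,delta)-skewmaps} and applying it to the above identity gives
$$(T-d)\diamond e=\s(e)\diamond(T-b)=\s(e)(T-b)\qquad\text{in }\mathcal{S}(A),$$
where now $e$ and $\s(e)$ stand for the corresponding constant functions. (One can equally verify this identity of functions by hand: for $x\in A$, $\bigl((T-d)\diamond e\bigr)(x)=({}^{e}x-d)e=\s(e)x+\delta(e)-de=\s(e)(x-b)$.)

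To finish, I would note that $(\mathcal{F}(A),\diamond)$ is a monoid with identity $1$ in which every nonzero constant $a$ is a unit with skew inverse the constant $a^{-1}$; hence $e$ and $\s(e)$ are units, and $T-b$ is a unit by hypothesis. Since the units of a monoid form a group, the identity $T-d=\s(e)\diamond(T-b)\diamond e^{\langle-1\rangle}$ presents $T-d$ as a product of units, so $T-d$ is skew invertible on $A=\Delta^{\s,\delta}(c)$; that is, $(T-d)^{-1}$ is defined at $c$. (As a byproduct, Lemma \ref{(L)Invertibility}(3) then forces $d\nsim c$, in accordance with Proposition \ref{(P)T-b}.) I do not expect a genuine obstacle here: essentially everything sits in the one polynomial identity. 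The only places that need a moment's care are the passage from a polynomial identity in $K[T;\s,\delta]$ to an identity of functions on the invariant set $A$, and the routine check that $T-b$ and $T-d$ really are the denominators of the minimal representations of $(T-b)^{-1}$ and $(T-d)^{-1}$, so that the definition of ``defined at'' applies verbatim. A more pedestrian route would instead argue through Proposition \ref{(P)T-b}, reducing solvability of the $(\s,\delta)$-metro equation $\s(x)c'+\delta(x)-dx=1$ (for $c'\in A$) to that of $\s(x)c'+\delta(x)-bx=1$ via the substitution coming from $d={}^{e}b$, but that computation is messier.
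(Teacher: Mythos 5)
Your proof is correct and rests on exactly the same identity as the paper's: the paper writes it as $T-d=\s(x)^{-1}(T-b)x$ with $b={}^{x}d$, which is your $(T-d)e=\s(e)(T-b)$ with $e=x^{-1}$, and then concludes by conjugating a unit by units. You have simply filled in the details the paper leaves to the reader (transporting the polynomial identity to the function ring on $\Delta^{\s,\delta}(a)$ and checking that nonzero constants are skew invertible).
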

	\begin{proof}
		The result follows from the definition and the identity 
		$$T-d=\s(x)^{-1}(T-b)x,$$ where $x\in K^*$ satisfies $b={}^{{x}}d$. \\
	\end{proof}

	We now give some examples. 
		\begin{example}
		Let $K[T;\s]$ be a skew polynomial ring of endomorphism type, that is, $\delta=0$. It follows from Proposition \ref{(P)T-b} that the skew rational function $T^{-1}$ is defined at $a\in K$ if and only if $a\neq 0$ and $\Delta^{\s,\delta}(a)\subset \s(K)$. Furthermore, the value of $T^{-1}$ at such an element $a$ is 
		$\s^{-1}\left(a^{-1}\right)$. In particular, if $\s$ is an automorphism, then $dom(T^{-1})=K^*$. 
	\end{example} 
		\begin{example}\label{(Ex)Complex}
			Consider the skew polynomial ring $\mathbb{C}[T;\bar{}\bar{}\,]$ where $\bar{}\bar{}\,$ is the complex conjugate map and $\delta=0$. Using Proposition  \ref{(P)T-b}, one can show that  the domain of $(T-b)^{-1}$, where $b\in \mathbb{C}$, is the set
			$\{z\in\mathbb{C}|\, |z|\neq|b|\}.$
			Furthermore, the value of $f(T)=(T-b)^{-1}$ at $z\in dom((T-b)^{-1})$ is 
			$$ f(z)=\frac{z+\overline{b}}{|z|^2-|b|^2}.$$
			We remark that similar results hold more generally for the case when\newline $\s:K\to K$ is an involution of a commutative field $K$.  
		\end{example}
	 
		\begin{example}\label{(Ex)quaternions}
			Consider the ring $\mathbb{H}[T]$ of polynomials over the skew field of quaternions in a central indeterminate $T$.  Using Proposition \ref{(P)T-b}, one can show that  the domain of $(T-q_0)^{-1}$, where $q_0\in \mathbb{H}$, is the set of all $q\in \mathbb{H}$ not conjugate to $q_0$. 
			Moreover, the value of $f(T)=(T-q_0)^{-1}$ at $q\in dom((T-q_0)^{-1})$ is 
			$$ f(q)=(q-\overline{q}_0)\left( q^2-2\text{Re}(q_0)q+|q_0|^2\right)^{-1}.$$
			This function plays a central role in  the version of ``quaternionic analysis"  introduced in \cite{aryapoor2022skew}. It has also been used as a Cauchy kernel, see \cite{ACauchykernel}. 
		\end{example} 
	
	\end{subsection}
	%%%%%%%%%%%%%%%%%%%%%%%%%%%%%%%%%%%%%%%%%%%%%%%%%%%%%%%%%%%%%%%%%   
%%%%%%%%%%%%%%%%%%%%%%%%%%%%%%%%%%%%%%%%%%%%%%%%%%%%%%%%%%%%%%%%% 
%%%%%%%%%%%%%%%%%%%%%%%%%%%%%%%%%%%%%%%%%%%%%%%%%%%%%%%%%%%%%%%%%
	\begin{subsection}{Evaluation of skew rational functions over centrally finite skew fields}
	For $a\in K$, we set 
	$$C^{\s,\delta}(a)\colonequals \{b\in K^*\,|\, {}^{b}a=a\}\cup\{0\}.$$
	Note that $C^{\s,\delta}(a)$ is a skew subfield of $K$ (see \cite[Lemma 3.1]{LamLeory1988}). 
	\begin{proposition}\label{(P)finite}
		Let $a\in K$ such that $K$ is finite-dimensional as a right $C^{\s,\delta}(a)$-vector space. Then, a skew rational function with minimal representation $P(T)^{-1}Q(T)$ is defined at $a$ iff $P(c)\neq 0$ for all $c\in \Delta^{\s,\delta}(a)$. 
	\end{proposition}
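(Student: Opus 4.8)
The plan is to reduce the statement to the invertibility of a single linear operator on a finite-dimensional vector space. By definition, $f(T)$ is defined at $a$ exactly when the denominator $P\colon \Delta^{\s,\delta}(a)\to K$ of its minimal representation is skew invertible as an element of $\mathcal S(\Delta^{\s,\delta}(a))$, so the proposition is equivalent to the assertion that $P$ is skew invertible on $\Delta^{\s,\delta}(a)$ if and only if $P(c)\ne 0$ for all $c\in\Delta^{\s,\delta}(a)$. The ``only if'' direction needs no hypothesis on $K$: if $P$ is skew invertible on $\Delta^{\s,\delta}(a)$, then Part (3) of Lemma \ref{(L)Invertibility} (or Proposition \ref{(P)inverserightinverse}) forces $P\bigl(\Delta^{\s,\delta}(a)\bigr)\subset K^*$, i.e.\ $P(c)\ne 0$ for every $c\in\Delta^{\s,\delta}(a)$.

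For the ``if'' direction I would use the identification described in Remark \ref{(Re)isomorphism}. Put $G=C^{\s,\delta}(a)$. Since the $(\s,\delta)$-action is transitive on $\Delta^{\s,\delta}(a)$ and $xG\mapsto {}^{x}a$ is an action-preserving bijection from $K^*/G$ onto $\Delta^{\s,\delta}(a)$, composing the resulting ring isomorphism $\mathcal S(\Delta^{\s,\delta}(a))\to\mathcal S(K^*/G)$ (Proposition \ref{(P)actionpreserving}) with the isomorphism of Proposition \ref{(P)transitive}(2) produces a ring isomorphism from $\mathcal S(\Delta^{\s,\delta}(a))$ onto $End(K_G)$ which sends $P$ to the right $G$-linear map $\phi_P\colon K\to K$ given by $\phi_P(x)=P({}^{x}a)\,x$ for $x\in K^*$ and $\phi_P(0)=0$. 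Hence $P$ is skew invertible iff $\phi_P$ is invertible in $End(K_G)$. Now the hypothesis enters: since $K$ is finite-dimensional as a right $G$-vector space, a right $G$-linear endomorphism of $K$ is invertible as soon as it is injective (the usual rank--nullity / exchange argument for vector spaces over a division ring). Finally $\ker\phi_P=\{0\}$, because $\phi_P(0)=0$ and, for $x\in K^*$, the equality $\phi_P(x)=P({}^{x}a)x=0$ would force $P({}^{x}a)=0$, which is excluded since $\Delta^{\s,\delta}(a)=\{{}^{x}a\,:\,x\in K^*\}$ and $P$ has no zeros there. Thus $\phi_P$ is injective, hence invertible, so $P$ is skew invertible, as desired.

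I expect the main difficulty to be conceptual rather than computational: one must recognize that skew invertibility of $P$ on the conjugacy class is, verbatim, the invertibility of the operator $\phi_P$ over the (possibly noncommutative) division ring $C^{\s,\delta}(a)$, and that the finite-dimensionality hypothesis is precisely what bridges the gap between the weak condition ``$P$ has no zeros on $\Delta^{\s,\delta}(a)$'' (injectivity of $\phi_P$) and the a priori stronger ``$P$ is skew invertible'' (bijectivity of $\phi_P$). Some bookkeeping is also required to pin down the correspondence of Remark \ref{(Re)isomorphism} explicitly, with the correct base point and the correct formula for $\phi_P$, but that part is routine.
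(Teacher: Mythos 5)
Your proposal is correct and follows essentially the same route the paper sketches: transporting $P$ to the right $C^{\s,\delta}(a)$-linear operator $\phi_P$ via Proposition \ref{(P)transitive} and Remark \ref{(Re)isomorphism}, and then using finite-dimensionality to upgrade injectivity (no zeros on the conjugacy class) to bijectivity (skew invertibility). You have merely filled in the details that the paper leaves to the reader.
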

	\begin{proof}
		The ``only if" direction is trivial. To prove the other direction, one can use Proposition \ref{(P)transitive} (see also Remark \ref{(Re)isomorphism}) and the fact that  $K$ is finite-dimensional as a right $C^{\s,\delta}(a)$-vector space. Alternatively, one can use Part (a) of Theorem 6.2 in \cite{MR2025912}.  
	\end{proof}
	\begin{remark}\label{(Re)conjugacydimension}
		It is known that $K$ is finite-dimensional as a right $C^{\s,\delta}(a)$-vector space iff the conjugacy class of $a$ is $(\s,\delta)$-algebraic (see Proposition 4.2 in \cite{lamleroy1988algebraic}). 
	\end{remark}
	As special cases of this proposition, we give the following results. 
		\begin{corollary}
		Assume that every conjugacy class of $K$ is $(\s,\delta)$-algebraic. Then, for any irreducible element $P(T)\in K[T;\sigma,\delta]$ of degree $>1$, the skew rational function $P(T)^{-1}$ is defined everywhere, i.e., $dom(P(T)^{-1})=K$. 
	\end{corollary}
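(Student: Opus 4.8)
The plan is to reduce everything to Proposition \ref{(P)finite} together with the elementary fact that an irreducible skew polynomial of degree $>1$ has no roots in $K$. Write $P(T)=\sum_{m=0}^{n}a_mT^m$ with $a_n\neq 0$ and $n=\deg P>1$. The first step is to pin down the minimal representation of $P(T)^{-1}$. Since $P(T)^{-1}=(a_n^{-1}P(T))^{-1}a_n^{-1}$ and $a_n^{-1}P(T)$ is monic of degree $n$, the minimal representation has denominator of degree at most $n$; conversely, if $P(T)^{-1}=R(T)^{-1}S(T)$ with $R(T)$ monic and $R(T),S(T)\neq 0$, then rewriting as $S(T)=R(T)P(T)^{-1}$ and multiplying on the right by $P(T)$ gives $R(T)=S(T)P(T)$ in $K[T;\s,\delta]$, so $\deg R=\deg S+n\geq n$. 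Hence in the minimal representation $\deg R=n$ and $\deg S=0$, and since $R(T)$ is monic we get $R(T)=a_n^{-1}P(T)$. In other words, the denominator of the minimal representation of $P(T)^{-1}$ is the monic associate $a_n^{-1}P(T)$ of $P(T)$, which is still irreducible of degree $n>1$.

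Next I would fix an arbitrary $a\in K$ and invoke the hypothesis: the conjugacy class of $a$ is $(\s,\delta)$-algebraic, so by Remark \ref{(Re)conjugacydimension} the skew field $K$ is finite-dimensional as a right $C^{\s,\delta}(a)$-vector space. Proposition \ref{(P)finite} then tells us that $P(T)^{-1}$ is defined at $a$ if and only if $(a_n^{-1}P)(c)\neq 0$ for every $c\in\Delta^{\s,\delta}(a)$; since evaluation at $c$ comes from the ring homomorphism of Example \ref{(Ex)(s,delta)-skewmaps} (so the constant $a_n^{-1}$ factors out and $(a_n^{-1}P)(c)=a_n^{-1}P(c)$), this condition is just $P(c)\neq 0$ for all $c\in\Delta^{\s,\delta}(a)$.

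Finally I would check that $P(T)$ has no root in $K$ whatsoever: if $P(c)=0$ for some $c\in K$, then by the characterization of the evaluation map recalled in Example \ref{(Ex)(s,delta)-skewmaps} we have $P(T)\in K[T;\s,\delta](T-c)$, that is $P(T)=M(T)(T-c)$ with $\deg M=n-1\geq 1$; since neither $M(T)$ nor $T-c$ is a unit, this contradicts irreducibility of $P(T)$. Consequently $P(c)\neq 0$ for all $c\in K$, in particular for all $c\in\Delta^{\s,\delta}(a)$, and so $P(T)^{-1}$ is defined at $a$. As $a\in K$ was arbitrary, $dom(P(T)^{-1})=K$.

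I do not expect a genuine obstacle. The only slightly delicate point is the bookkeeping in the first step showing that the denominator of the minimal representation of $P(T)^{-1}$ is an associate of $P(T)$ — this is what allows the condition ``$P$ has no roots in $\Delta^{\s,\delta}(a)$'' to control skew-invertibility of the denominator — and it amounts to a routine degree computation in the Ore domain $K[T;\s,\delta]$.
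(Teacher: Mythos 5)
Your proof is correct and follows exactly the route the paper intends: the corollary is stated as a special case of Proposition \ref{(P)finite} (via Remark \ref{(Re)conjugacydimension}), and your argument simply supplies the details the paper leaves implicit, namely that the minimal denominator of $P(T)^{-1}$ is the monic associate $a_n^{-1}P(T)$ and that an irreducible skew polynomial of degree $>1$ has no right roots in $K$. No gaps.
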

	\begin{corollary}
		Let $K$ be a centrally finite skew field. Assume that $\s$ is the identity homomorphism and $\delta=0$. Then, a skew rational function in $K[T]=K[T;id,0]$ is defined at $a\in K$ iff  the denominator of its minimal  representation does not have a root in the conjugacy class
		$\{bab^{-1}\,|\, b\in K^*\}$. 
	\end{corollary}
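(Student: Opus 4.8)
The plan is to obtain the corollary as a direct specialization of Proposition \ref{(P)finite}, so that the only real task is to verify its hypothesis for \emph{every} $a\in K$ in the present setting. First I would unwind the definitions when $\s=id$ and $\delta=0$: formula \eqref{(E)conjugate} collapses to ${}^{b}a=bab^{-1}$, so the $(\s,\delta)$-action is ordinary conjugation, the conjugacy class $\Delta^{\s,\delta}(a)$ is $\{bab^{-1}\mid b\in K^*\}$, and $C^{\s,\delta}(a)=\{b\in K^*\mid ba=ab\}\cup\{0\}$ is the centralizer subfield of $a$ in $K$.

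The key point is then that $Z(K)\subseteq C^{\s,\delta}(a)$, since every central element commutes with $a$. Consequently any finite subset of $K$ spanning $K$ as a right $Z(K)$-vector space also spans $K$ as a right $C^{\s,\delta}(a)$-vector space — we are merely enlarging the field of scalars. As $K$ is centrally finite, such a finite spanning set exists, so $K$ is finite-dimensional as a right $C^{\s,\delta}(a)$-vector space for every $a\in K$ (equivalently, by the tower formula, $[K:C^{\s,\delta}(a)]\le[K:Z(K)]<\infty$).

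With the hypothesis in place, Proposition \ref{(P)finite} applies word for word: a skew rational function with minimal representation $P(T)^{-1}Q(T)$ is defined at $a$ iff $P(c)\neq 0$ for all $c\in\Delta^{\s,\delta}(a)$. It remains only to note that ``$P(c)\neq 0$ for all $c$ in the conjugacy class of $a$'' is simply a restatement of ``$P(T)$ has no root in $\{bab^{-1}\mid b\in K^*\}$'', where $P(T)$ is precisely the denominator of the minimal representation; this yields the stated equivalence.

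I do not anticipate a genuine obstacle, as this is a corollary whose content is the translation of the abstract finiteness hypothesis of Proposition \ref{(P)finite} into the concrete centrally-finite case. The only places calling for a little care are the direction of the scalar-extension inequality (passing from $Z(K)$ up to the larger field $C^{\s,\delta}(a)$ makes the dimension drop, not grow) and keeping the right-module conventions consistent with Proposition \ref{(P)transitive}; since $Z(K)$ is two-sided central, neither causes any difficulty.
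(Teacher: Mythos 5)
Your proposal is correct and follows exactly the route the paper intends: the corollary is presented as a special case of Proposition \ref{(P)finite}, and the only content is the observation that for $\s=id$, $\delta=0$ the action is ordinary conjugation, $C^{\s,\delta}(a)$ is the centralizer of $a$, and $Z(K)\subseteq C^{\s,\delta}(a)$ forces $[K:C^{\s,\delta}(a)]\le[K:Z(K)]<\infty$. The paper leaves this verification implicit; you have supplied it correctly.
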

Let us now give some examples. 
	\begin{example}
		The skew field $\mathbb{H}$ of quaternions is centrally finite.  A classical result of Niven states that $\mathbb{H}$ is left algebraically closed.  Therefore, every skew rational function $f(T)$ over  $\mathbb{H}$ has a minimal representation of the form
		$$f(T)=\left((T-q_1)(T-q_2)\cdots (T-q_n)\right)^{-1}Q(T).$$
		The domain of $f(T)$ consists of all $q\in \mathbb{H}$ which are not conjugate to any of the $q_i$'s. Note that every conjugacy class in  $\mathbb{H}$ is either a singleton or a 2-dimensional sphere.
	\end{example} 
More generally, we have the following example. 
		\begin{example}
		Assume that every $(\s,\delta)$-conjugacy class in $K$ is algebraic. In the light of Remark \ref{(Re)conjugacydimension}, we see that for every skew rational function $f$,  the complement of $dom(f)$ in $K$ is a finite union of conjugacy classes. 
	\end{example} 
In the light of Proposition \ref{(P)finite}, the following example completes the discussion of evaluating skew rational functions in $\mathbb{C}(T;\bar{}\bar{}\,)$. 
	\begin{example}
			Consider the skew polynomial ring $\mathbb{C}[T;\bar{}\bar{}\,]$ where $\bar{}\bar{}\,$ is the complex conjugate map and $\delta=0$. It is known that every irreducible monic element of  $\mathbb{C}[T;\bar{}\bar{}\,]$ is either of the form $T-a$ or the form $T^2+bT+c$, where $|z|^2+bz+c\neq 0$ for all $z\in \mathbb{C}$ (see Example 1.15.4 in \cite{leroy2012noncommutative}). The case  $T-b$ was treated in Example \ref{(Ex)Complex}. For an irreducible element $P(T)=T^2+bT+c$, the skew rational function $P(T)^{-1}$ is defined everywhere, and moreover, the value of $f(T)=P(T)^{-1}$ at $z\in\mathbb{C}$ is equal to
			$$
			f(a)=\frac{|z|^2-bz+\overline{c}}{||z|^2+c|^2-|zb|^2}.
			$$
	\end{example}
\end{subsection}
%%%%%%%%%%%%%%%%%%%%%%%%%%%%%%%%%%%%%%%%%%%%%%%%%%%%%%%%%%%%%%%%%   
%%%%%%%%%%%%%%%%%%%%%%%%%%%%%%%%%%%%%%%%%%%%%%%%%%%%%%%%%%%%%%%%% 
%%%%%%%%%%%%%%%%%%%%%%%%%%%%%%%%%%%%%%%%%%%%%%%%%%%%%%%%%%%%%%%%% 
\begin{subsection}{The product formula for skew rational functions}	
	 For $a\in K$, we let $Def(a)$ denote the set of all
	skew rational functions that are defined at $a$. The assignment $a\mapsto f(a)$ gives rise to the map $$ev_a:Def(a)\to \mathcal{S}(\Delta^{\s,\delta}(a)),$$ where $\mathcal{S}(\Delta^{\s,\delta}(a))$ is the ring of skew-convex functions on the conjugacy class of $a$. This part deals with $Def(a)$ and $ev_a$, and their properties. 
	
	Fix $a\in K$. Using the observation that the union of any family of left Ore subsets of $K[T;\s,\delta]$ is a left Ore set, we see that there exists a unique left Ore subset $S(a)$ of $K[T;\s,\delta]$ which is maximal with respect to inclusion among all  left Ore subsets $S$ of $K[T;\s,\delta]$ satisfying the property
	$$P(T)\in S\implies  Q(T)^{-1}\in Def(a), \forall\text{ monic irreducible factor $Q(T)$ of } P(T).$$
	Here, the word ``factor" means that $P(T)=Q_1(T)Q(T)Q_2(T)$ for some skew polynomials $Q_1(T),Q_2(T)$. 
	It is clear that the Ore localization $S(a)^{-1}K[T;\s,\delta]$ is a subset of $Def(a)$. We denote $S(a)^{-1}K[T;\s,\delta]$ by $Def_o(a)$.
		\begin{proposition}
		The evaluation map $ev_a:Def_o(a)\to \mathcal{S}(\Delta^{\s,\delta}(a))$ is   a ring homomorphism.
	\end{proposition}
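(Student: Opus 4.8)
The plan is to reduce the claim to the ring structure already established in Theorem \ref{(T)Thetwistedring}, together with the multiplicativity of evaluation on skew polynomials (Example \ref{(Ex)(s,delta)-skewmaps}) and the behaviour of skew inverses inside $\mathcal{S}(\Delta^{\s,\delta}(a))$ (Lemma \ref{(L)Inverseofc-invariant}). First I would note that $Def_o(a)=S(a)^{-1}K[T;\s,\delta]$ is genuinely a ring: it is the Ore localization of $K[T;\s,\delta]$ at the left Ore set $S(a)$, so every element has a representation $P(T)^{-1}Q(T)$ with $P(T)\in S(a)$, and addition and multiplication are the usual Ore-localization operations (bringing two fractions to a common left denominator). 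The target $\mathcal{S}(\Delta^{\s,\delta}(a))$ is a ring by Theorem \ref{(T)Thetwistedring}. So the task is to check that $ev_a$, however it is defined on a general element of $Def_o(a)$, respects both operations and the identity.

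The key point is that $ev_a$ factors through the skew-convex function ring $K[\s,\delta]$ restricted to the conjugacy class: by Example \ref{(Ex)(s,delta)-skewmaps} there is a ring homomorphism $K[T;\s,\delta]\to\mathcal{S}(\Delta^{\s,\delta}(a))$, $P(T)\mapsto (P:\Delta^{\s,\delta}(a)\to K)$, call it $\rho$. For any $P(T)\in S(a)$, each monic irreducible factor $Q(T)$ of $P(T)$ has $Q(T)^{-1}\in Def(a)$, which by the definition of "defined at $a$" means $\rho(Q)$ is skew-invertible in $\mathcal{S}(\Delta^{\s,\delta}(a))$; since $\rho(P)$ is (up to units of $K$, which are skew-invertible) a product of such factors, $\rho(P)$ itself is skew-invertible in $\mathcal{S}(\Delta^{\s,\delta}(a))$, and its skew-inverse lies in $\mathcal{S}(\Delta^{\s,\delta}(a))$ by Lemma \ref{(L)Inverseofc-invariant}. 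Hence $\rho$ sends $S(a)$ into the group of units of $\mathcal{S}(\Delta^{\s,\delta}(a))$. By the universal property of Ore localization, $\rho$ extends uniquely to a ring homomorphism $\widehat{\rho}:Def_o(a)=S(a)^{-1}K[T;\s,\delta]\to\mathcal{S}(\Delta^{\s,\delta}(a))$ with $\widehat{\rho}(P(T)^{-1}Q(T))=\rho(P)^{\langle-1\rangle}\diamond\rho(Q)$.

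It then remains to identify $\widehat{\rho}$ with $ev_a$, i.e. to check that for the minimal representation $P(T)^{-1}Q(T)$ of $f(T)$ the value $\widehat{\rho}(f)(a)=(P^{\langle-1\rangle}\diamond Q)(a)$ agrees with the definition of $f(a)$, and more generally that $\widehat{\rho}(f)$ does not depend on the chosen representation. The latter is automatic from the universal property: any two representations $P_1^{-1}Q_1=P_2^{-1}Q_2$ in $S(a)^{-1}K[T;\s,\delta]$ are related by the Ore condition, and a ring homomorphism out of the localization is forced to agree on them; in particular the minimal representation gives the same element of $\mathcal{S}(\Delta^{\s,\delta}(a))$ as any other. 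Since $\widehat{\rho}$ is a ring homomorphism and agrees with $ev_a$ by construction, we conclude that $ev_a:Def_o(a)\to\mathcal{S}(\Delta^{\s,\delta}(a))$ is a ring homomorphism, and restricting the output functions to evaluation at $a$ (which is a ring homomorphism $\mathcal{S}(\Delta^{\s,\delta}(a))\to K$? — no, it is not, so one keeps the target as $\mathcal{S}(\Delta^{\s,\delta}(a))$, consistent with the statement).

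The main obstacle I expect is the bookkeeping around "units of $S(a)^{-1}K[T;\s,\delta]$": one must be careful that membership of $P(T)$ in the Ore set $S(a)$ really does force $\rho(P)$ to be skew-invertible — this uses that skew-invertibility of a product in $\mathcal{S}(\Delta^{\s,\delta}(a))$ follows from skew-invertibility of each factor (so that a product of skew-invertible functions is skew-invertible, which holds because the skew-invertible elements of $\mathcal{S}(\Delta^{\s,\delta}(a))$ form a group by Lemma \ref{(L)Inverseofc-invariant} and Theorem \ref{(T)Thetwistedring}), together with the fact that the irreducible factorization of $P(T)$ has all factors among those allowed by the defining property of $S(a)$. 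Once that is in hand, invoking the universal property of Ore localization is routine and finishes the proof.
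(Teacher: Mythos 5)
Your proposal is correct and follows essentially the same route as the paper, whose proof is just the one-line appeal to ``the above discussion and the universal property of the Ore localization'': you make explicit the homomorphism $\rho:K[T;\s,\delta]\to\mathcal{S}(\Delta^{\s,\delta}(a))$ from Example \ref{(Ex)(s,delta)-skewmaps}, check that $\rho$ carries $S(a)$ into the units of $\mathcal{S}(\Delta^{\s,\delta}(a))$ via the factorization into irreducibles and Lemma \ref{(L)Inverseofc-invariant}, and then invoke the universal property. The only detail worth tightening is the identification of $\widehat{\rho}$ with $ev_a$ on minimal representations, which follows by applying $\widehat{\rho}$ to the identity $P(T)f(T)=Q(T)$ once one knows $\rho(P)$ is a unit (because the minimal denominator is a right factor of some element of $S(a)$).
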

	\begin{proof}
		This follows from the above discussion and the universal property of the Ore localization. 
	\end{proof}
	As an application of this proposition, we present the  product formula for skew rational functions.
	\begin{corollary}
		Let $f(T),G(T)\in  Def_o(a)$ and set $h(T)=f(T)g(T)$. Then
		$$	h(a)=\begin{cases}
			f\left(  \, {}^{{g(a)}}a\, \right) g(a) & \text{if } g(a)\neq 0,\\
			0& \text{if } g(a)=0.
		\end{cases}$$
	\end{corollary}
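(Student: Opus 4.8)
The plan is to obtain the corollary as an immediate consequence of the preceding proposition, which asserts that $ev_a\colon Def_o(a)\to \mathcal{S}(\Delta^{\s,\delta}(a))$ is a ring homomorphism. First I would observe that $Def_o(a)=S(a)^{-1}K[T;\s,\delta]$ is a ring, so the product $h(T)=f(T)g(T)$ again lies in $Def_o(a)$ and $h(a)=ev_a(h(T))$ is defined. Writing $f,g,h$ also for the skew-convex functions $ev_a(f(T)),ev_a(g(T)),ev_a(h(T))$ on $\Delta^{\s,\delta}(a)$, multiplicativity of $ev_a$ gives the identity $h=f\diamond g$ in $\mathcal{S}(\Delta^{\s,\delta}(a))$.

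Next I would evaluate this identity of functions at the point $a\in \Delta^{\s,\delta}(a)$, obtaining $h(a)=(f\diamond g)(a)$, and then substitute $x=a$ into the defining formula~\eqref{(D)product} of the skew product. This yields at once $h(a)=f\!\left({}^{g(a)}a\right)g(a)$ when $g(a)\neq 0$ and $h(a)=0$ when $g(a)=0$, which is exactly the claimed case expression; here $g(a)$ is the value of the skew rational function $g(T)$ at $a$, i.e.\ $ev_a(g(T))$ evaluated at $a$, and $f\!\left({}^{g(a)}a\right)$ is the value at the conjugate ${}^{g(a)}a$ of the function $ev_a(f(T))$.

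The only point to record explicitly is that every term on the right-hand side is meaningful: when $g(a)\neq 0$, the element ${}^{g(a)}a$ lies in $\Delta^{\s,\delta}(a)$ because that conjugacy class is $(\s,\delta)$-invariant, so $ev_a(f(T))$ is a genuine function there and $f\!\left({}^{g(a)}a\right)$ is unambiguous. I do not anticipate any real obstacle: the entire substantive content — that $S(a)$ can be taken to be a maximal left Ore set and that the Ore localization inherits a ring homomorphism into $\mathcal{S}(\Delta^{\s,\delta}(a))$ through the universal property — has already been absorbed into the proposition, and this corollary simply transcribes the resulting multiplication law into the same shape as the original product formula~\eqref{(D)product} for skew polynomials.
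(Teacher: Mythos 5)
Your proposal is correct and follows exactly the route the paper intends: the corollary is stated as an immediate application of the preceding proposition that $ev_a$ is a ring homomorphism on $Def_o(a)=S(a)^{-1}K[T;\s,\delta]$, so that $h=f\diamond g$ in $\mathcal{S}(\Delta^{\s,\delta}(a))$, and evaluating at $a$ via the defining formula of the skew product gives the stated case expression. The paper offers no further argument, so your write-up matches (and slightly elaborates on) its proof.
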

	In general, the problem of determining $S(a)$ seems to be difficult. Here, we present two partial results. The first result concerns skew polynomials of degree 1. 
	\begin{proposition}
		A polynomial $T-b$ belongs to $S(a)$ iff $(T-b)^{-1}\in Def(a)$. 
	\end{proposition}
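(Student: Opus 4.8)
The plan is to prove the two implications separately; the forward one is immediate, and the content lies in the reverse. Write $R=K[T;\s,\delta]$, and for $0\neq P\in R$ let $C(P)$ denote the multiset of composition factors of the finite-length left $R$-module $R/RP$; by the Jordan--H\"older theorem for the PID $R$, $C(P)$ consists of the classes $[R/RC_i]$ where the $C_i$ are the irreducible factors in any complete factorization $P=C_1\cdots C_k$, so that $C(PP')=C(P)\uplus C(P')$, and $[R/RQ]\in C(P)$ whenever $Q$ is irreducible with $P=AQB$. For the direction $T-b\in S(a)\Rightarrow (T-b)^{-1}\in Def(a)$: the minimal representation of $(T-b)^{-1}$ is $(T-b)^{-1}\cdot1$ ($T-b$ being monic of degree one and not a unit), so the condition is meaningful, and since $T-b=1\cdot(T-b)\cdot1$ displays $T-b$ as a monic irreducible factor of itself, the defining property of $S(a)$ forces $(T-b)^{-1}\in Def(a)$.

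For the reverse direction, assume $(T-b)^{-1}\in Def(a)$. Because a union of left Ore subsets of $R$ is left Ore and the condition in the definition of $S(a)$ is inherited by unions, $S(a)$ is the largest left Ore subset of $R$ consisting of ``admissible'' polynomials, a polynomial $P$ being admissible when $Q^{-1}\in Def(a)$ for every monic irreducible factor $Q$ of $P$. Hence it suffices to produce \emph{one} left Ore set $\hat S$ that contains $T-b$ and consists of admissible polynomials. I would take
\[
\hat S=\{\,0\neq P\in R:\ \text{every composition factor of }R/RP\text{ is isomorphic to }R/R(T-d)\text{ for some }d\sim b\,\},
\]
equivalently, the set of nonzero $P$ all of whose irreducible factors are linear of the form $T-d$ with $d\sim b$.

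It then remains to verify three things. First, $T-b\in\hat S$ and $\hat S$ is multiplicatively closed; both are immediate from $C(PP')=C(P)\uplus C(P')$. Second, every $P\in\hat S$ is admissible: if $P=AQB$ with $Q$ monic irreducible, then $[R/RQ]\in C(P)$, so $R/RQ\cong R/R(T-d)$ for some $d\sim b$; since $R/R(T-d)\cong R/R(T-d')$ forces $d\sim d'$, we get $Q=T-d'$ with $d'\sim b$, and Proposition~\ref{(P)conjugateT-b}, applied with parameters $a$, $b$, $a$, $d'$ (using $a\sim a$ and $b\sim d'$), gives $(T-d')^{-1}\in Def(a)$. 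Third, $\hat S$ is a left Ore set: given $0\neq r\in R$ and $s\in\hat S$, let $\ell$ be a least common left multiple of $r$ and $s$ (nonzero, as $R$ is a left PID and a left Ore domain) and write $\ell=r''r$; then $C(\ell)=C(r'')\uplus C(r)$, whereas the canonical injection $R/R\ell\hookrightarrow R/Rr\oplus R/Rs$ yields $C(\ell)\subseteq C(r)\uplus C(s)$, and cancelling $C(r)$ (legitimate since $C(r)\subseteq C(\ell)$) gives $C(r'')\subseteq C(s)$, so $r''\in\hat S$; together with $r''r=\ell\in Rs$, this is the left Ore condition.

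Once the three points are in place, $\hat S\subseteq S(a)$, hence $T-b\in S(a)$, and the proof is complete. I expect the third point to be the main obstacle: it rests entirely on the multiset arithmetic of composition factors under least common left multiples — in particular on the legitimacy of the cancellation, which uses $C(r)\subseteq C(\ell)$ — so that is where the write-up would need the most care. The other ingredients, namely the factorization theory of $K[T;\s,\delta]$ as a PID and Proposition~\ref{(P)conjugateT-b}, are standard and available off the shelf.
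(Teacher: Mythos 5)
Your argument is correct, but it reaches the Ore condition by a genuinely different route than the paper. The paper also reduces to exhibiting a left Ore set of ``admissible'' polynomials containing $T-b$, and it takes essentially the same set as your $\hat S$ (it writes down only the linear generators $T-c$ with $c\sim b$, leaving the multiplicative closure implicit, and likewise invokes Proposition~\ref{(P)conjugateT-b} for admissibility); but it verifies the Ore condition in one explicit step via the product formula: for $P(T)$ with $P(c)\neq 0$ one has $\bigl(T-{}^{P(c)}c\bigr)P(T)\in K[T;\s,\delta](T-c)$, and ${}^{P(c)}c\sim c\sim b$, so a common left multiple with a left cofactor in the set is produced directly (the case $P(c)=0$ being trivial). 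Your verification instead runs through the Jordan--H\"older multiset $C(-)$ of composition factors, the embedding $R/R\ell\hookrightarrow R/Rr\oplus R/Rs$ for the least common left multiple, and cancellation of finite multisets to conclude $C(r'')\subseteq C(s)$; this is precisely the machinery the paper reserves for its final theorem on algebraic conjugacy classes, so you have in effect proved a stronger, degree-independent Ore statement and specialized it. The trade-off: the paper's identity is shorter and self-contained for linear factors, while your argument is more uniform and makes explicit two points the paper glosses over, namely that the multiplicative closure must be taken and that the left cofactor $r''$ of $r$ in the LCLM again lies in the set. All the individual steps you flag (that $R/RQ\cong R/R(T-d)$ forces $Q$ linear with root conjugate to $d$, and that the multiset cancellation is legitimate for finite multisets) do hold, so I see no gap.
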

	\begin{proof}
		The ``only if" direction is trivial. To prove the other direction, let $(T-b)^{-1}\in Def(a)$.  Consider the set $S$ of all skew polynomials $T-c$ where $c$ is a conjugate of $b$. It is enough to show that $S$ is a left ore set since, by Proposition \ref{(P)conjugateT-b},  $(T-c)^{-1}\in Def(a)$, for all $T-c\in S$. The fact that $S$ is a left Ore set is a consequence of the fact that
		$$(T-{}^{{P(c)}}c)P(T)\in K[T;\s,\delta](T-c),$$
	for any $P(T)\in K[T;\s,\delta]$ with $P(c)\neq 0$. 
	\end{proof}
	Our second result solves the problem in the case of algebraic conjugacy classes. 
	\begin{theorem}
		Assume that $\Delta^{\s,\delta}(a)$ is $(\s,\delta)$-algebraic. Then, a skew polynomial $P(T)$ belongs to $S(a)$ iff $P(T)$ has no (right) roots in the conjugacy class of $a$. In particular, we have $Def_o(f)=Def(a)$.
	\end{theorem}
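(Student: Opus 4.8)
The plan is to prove that $S(a)$ coincides with the set
\[
S=\bigl\{\,P(T)\in K[T;\s,\delta]\smallsetminus\{0\}\ :\ P(T)\ \text{has no right root in}\ \Delta^{\s,\delta}(a)\,\bigr\},
\]
and then to read off the statement about $Def_o(a)$. Since $S(a)$ is by construction the largest left Ore subset of $K[T;\s,\delta]$ satisfying the property appearing in its definition, it suffices to show (i) $S(a)\subseteq S$, and (ii) $S$ is itself a left Ore subset satisfying that property. For (i): if $P\in S(a)$ had a right root $c\in\Delta^{\s,\delta}(a)$, then $T-c$ would be a monic irreducible factor of $P$, so the defining property of $S(a)$ would give $(T-c)^{-1}\in Def(a)$; but the minimal representation of $(T-c)^{-1}$ is $(T-c)^{-1}\cdot1$, and $c$ is a right root of $T-c$ lying in $\Delta^{\s,\delta}(a)$, so Proposition \ref{(P)finite} yields $(T-c)^{-1}\notin Def(a)$, a contradiction. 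In particular every $P\in S(a)$ is nonzero.

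For (ii) the crucial point is a closure property of $S$: if $P\in S$ and $P=Q_1(T)Q(T)Q_2(T)$ in $K[T;\s,\delta]$, then $Q\in S$. Indeed, suppose $c\in\Delta^{\s,\delta}(a)$ were a right root of $Q$. Since $Q(c)=0$, the product formula (Example \ref{(Ex)(s,delta)-skewmaps}) gives $(Q_1Q)(c)=0$, so $Q_1Q=V(T-c)$ for some $V$, and $P=V(T-c)Q_2$. If $Q_2$ has a right root $c_2\in\Delta^{\s,\delta}(a)$ then $T-c_2$ right-divides $P$, contradicting $P\in S$. Otherwise $Q_2$ has no right root in $\Delta^{\s,\delta}(a)$, hence is skew invertible on $\Delta^{\s,\delta}(a)$ by Proposition \ref{(P)finite}, so $x\mapsto{}^{Q_2(x)}x$ is a bijection of $\Delta^{\s,\delta}(a)$ by Lemma \ref{(L)Invertibility}(3); choosing $c'$ with ${}^{Q_2(c')}c'=c$ (so $Q_2(c')\neq0$), the product formula gives $\bigl((T-c)Q_2\bigr)(c')=0$ and hence $P(c')=0$ with $c'\in\Delta^{\s,\delta}(a)$, again a contradiction. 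Applying this with $Q$ a monic irreducible factor and invoking Proposition \ref{(P)finite} once more shows that $S$ satisfies the property required in the definition of $S(a)$; multiplicative closedness, $1\in S$ and $0\notin S$ likewise follow from the product formula.

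The main obstacle is the left Ore condition for $S$. Given $P\in S$ and $B\in K[T;\s,\delta]$ (the case $B=0$ being trivial), choose a generator $m$ of the left ideal $K[T;\s,\delta]P\cap K[T;\s,\delta]B$ and write $m=\widehat P\,B=\widehat B\,P$; it then suffices to show $\widehat P\in S$, for then $\widehat P\,B=\widehat B\,P$ is the required identity. Using that $K[T;\s,\delta]$ is a left PID, a short module computation yields an isomorphism $K[T;\s,\delta]/K[T;\s,\delta]\widehat P\cong K[T;\s,\delta]/K[T;\s,\delta]P'$, where $P=P'd$ and $d$ is a greatest common right divisor of $P$ and $B$; by the closure property $P'\in S$. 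One then uses that membership in $S$ is an invariant of the similarity class of a polynomial. This is where Remark \ref{(Re)isomorphism} enters: since $\Delta^{\s,\delta}(a)$ is algebraic, $\mathcal{S}(\Delta^{\s,\delta}(a))\cong End(K_{C^{\s,\delta}(a)})$ is simple Artinian, so $P\in S$ is equivalent to $\lambda(P)$ being invertible, equivalently injective on $K$, equivalently $\operatorname{Hom}_{K[T;\s,\delta]}\!\bigl(K[T;\s,\delta]/K[T;\s,\delta]P,K\bigr)=0$ (with $K$ regarded as a left $K[T;\s,\delta]$-module via $\lambda$) — a condition depending only on $K[T;\s,\delta]/K[T;\s,\delta]P$. (Alternatively, one checks by hand that a monic $k$ with $K[T;\s,\delta]/K[T;\s,\delta]k\cong K[T;\s,\delta]/K[T;\s,\delta](T-c)$ must equal $T-c'$ for some $c'\sim c$.) Hence $\widehat P\in S$, so $S$ is left Ore, $S\subseteq S(a)$, and therefore $S=S(a)$.

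Finally, $Def_o(a)=Def(a)$: the inclusion $\subseteq$ was already observed in the text, and if $f\in Def(a)$ has minimal representation $P^{-1}Q$ then $P$ is skew invertible on $\Delta^{\s,\delta}(a)$, so $P$ has no right root there by Proposition \ref{(P)finite}, i.e.\ $P\in S=S(a)$, whence $f=P^{-1}Q\in S(a)^{-1}K[T;\s,\delta]=Def_o(a)$. The only genuinely delicate step in this scheme is thus the Ore property of $S$ — concretely, the similarity-invariance of ``having no right root in $\Delta^{\s,\delta}(a)$'' — everything else reducing mechanically to Proposition \ref{(P)finite}, the product formula, and Lemma \ref{(L)Invertibility}.
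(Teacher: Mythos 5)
Your proof is correct, and its overall skeleton matches the paper's: identify $S(a)$ with the set $S$ of polynomials having no right root in $\Delta^{\s,\delta}(a)$, and reduce the left Ore condition to showing that the complementary factor in the least left common multiple again lies in $S$. But the decisive step is handled differently. The paper reduces to irreducible $P$, notes that the complementary factor $P_1$ is then similar to $P$, and imports Lam--Leroy's external result (Lemma 6.4 of the cited reference) that right roots of similar polynomials are conjugate. You instead treat general $P$ via the gcrd/llcm parallelogram law, prove the factor-closure of $S$ directly from the product formula and Lemma \ref{(L)Invertibility}(3), and establish similarity-invariance of ``no right root in $\Delta^{\s,\delta}(a)$'' internally: under the algebraicity hypothesis $\mathcal{S}(\Delta^{\s,\delta}(a))\cong End(K_{C^{\s,\delta}(a)})$ is simple Artinian, so $P\in S$ is equivalent to injectivity of $\lambda(P)$, i.e.\ to $\operatorname{Hom}_{K[T;\s,\delta]}(K[T;\s,\delta]/K[T;\s,\delta]P,K)=0$, a condition depending only on the cyclic module. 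This buys self-containedness (everything follows from Proposition \ref{(P)finite}, Proposition \ref{(P)transitive}/Remark \ref{(Re)isomorphism}, the product formula, and standard PID module identities) and makes explicit several points the paper leaves to the reader, including the closure of $S$ under factors and the final deduction $Def_o(a)=Def(a)$; the cost is that your similarity-invariance argument genuinely uses algebraicity, whereas the root-conjugacy lemma the paper cites holds without it. Both arguments are valid under the theorem's hypotheses.
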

	\begin{proof}
		The ``only if" direction is proved in Proposition \ref{(P)finite} (note that if a factor of $P(T)$ has a root in $\Delta^{\s,\delta}(a)$, then $P(T)$ has a root in $\Delta^{\s,\delta}(a)$, as proved in \cite[Coroallry 6.3]{MR2025912} ).  To prove the reverse direction, let $S$ be the set of all skew polynomials $P(T)$ with (right) roots in the conjugacy class of $a$. It is enough to show that $S$ is a left Ore set. We need to show that the set  $SQ(T)\cap K[T;\s,\delta]P(T)$ is nonempty for every $P(T)\in S$ and $Q(T)\in K[T;\s,\delta]$. Without loss of generality, we may assume that $P(T)$ is irreducible. If $Q(T)\in K[T;\s,\delta]P(T)$, the proof is trivial. So, assume that $Q(T)\notin K[T;\s,\delta]P(T)$. Let $L(T)$ be the least left common multiple of $P(T)$ and $Q(T)$. Then, $L(T)=P_1(T)Q(T)=Q_1(T)P(T)$ for some $P_1(T), Q_1(T)\in K[T;\s,\delta]$. Since  $K[T;\s,\delta]$ is a UFD, we see that $P_1(T)$ and $P(T)$ must be similar, i.e., the $K[T;\s,\delta]$-modules $K[T;\s,\delta]/K[T;\s,\delta]P_1(T)$ and  $K[T;\s,\delta]/K[T;\s,\delta]P(T)$ are isomorphic. It follows from Lemma 6.4 in \cite{WedderburnII} that every right root of $P_1(T)$ is conjugate to a right root of $P(T)$. Therefore, $P_1(T)$ has no right root in  $\Delta^{\s,\delta}(a)$, and consequently $P_1(T)\in S$. This completes the proof. 
	\end{proof}
We conclude the paper with the following example as an application of the above theorem. 		
	\begin{example}
		Consider the ring $\mathbb{H}[T]$ of polynomials over the skew field of quaternions in a central indeterminate $T$. For any $q_0\in \mathbb{H}$, $Def(q_0)$ is a ring consisting of all $P(T)^{-1}Q(T)\in \mathbb{H}(T)$ such that $P(q)\neq 0$ when $q$ is conjugate to $q_0$.   
	\end{example} 
\end{subsection}
%%%%%%%%%%%%%%%%%%%%%%%%%%%%%%%%%%%%%%%%%%%%%%%%%%%%%%%%%%%%%%%%%   
%%%%%%%%%%%%%%%%%%%%%%%%%%%%%%%%%%%%%%%%%%%%%%%%%%%%%%%%%%%%%%%%% 
%%%%%%%%%%%%%%%%%%%%%%%%%%%%%%%%%%%%%%%%%%%%%%%%%%%%%%%%%%%%%%%%% 
  
%%%%%%%%%%%%%%%%%%%%%%%%%%%%%%%%%%%%%%%%%%%%%%%%%%%%%%%%%%%%%%%%%   
%%%%%%%%%%%%%%%%%%%%%%%%%%%%%%%%%%%%%%%%%%%%%%%%%%%%%%%%%%%%%%%%% 
%%%%%%%%%%%%%%%%%%%%%%%%%%%%%%%%%%%%%%%%%%%%%%%%%%%%%%%%%%%%%%%%% 

%%%%%%%%%%%%%%%%%%%%%%%%%%%%%%%%%%%%%%%%%%%%%%%%%%%%%%%%%%%%%%%%%   
%%%%%%%%%%%%%%%%%%%%%%%%%%%%%%%%%%%%%%%%%%%%%%%%%%%%%%%%%%%%%%%%% 
%%%%%%%%%%%%%%%%%%%%%%%%%%%%%%%%%%%%%%%%%%%%%%%%%%%%%%%%%%%%%%%%% 

%%%%%%%%%%%%%%%%%%%%%%%%%%%%%%%%%%%%%%%%%%%%%%%%%%%%%%%%%%%%%%%%% 
%%%%%%%%%%%%%%%%%%%%%%%%%%%%%%%%%%%%%%%%%%%%%%%%%%%%%%%%%%%%%%%%%   
%%%%%%%%%%%%%%%%%%%%%%%%%%%%%%%%%%%%%%%%%%%%%%%%%%%%%%%%%%%%%%%%% 

%%%%%%%%%%%%%%%%%%%%%%%%%%%%%%%%%%%%%%%%%%%%%%%%%%%%%%%%%%%%%%%%% 
\end{section} 

\bibliographystyle{plain}
\bibliography{SkewConvexFunctionRings}

\begin{thebibliography}{10}

\bibitem{aryapoor2022skew}
Masood Aryapoor.
\newblock Skew analysis over quaternions. i.
\newblock {\em arXiv preprint arXiv:2211.07006}, 2022.

\bibitem{MR4586427}
Nabil Bennenni and Andr\'{e} Leroy.
\newblock Evaluation of iterated {O}re polynomials and skew {R}eed-{M}uller
  codes.
\newblock In {\em Algebra and {C}oding {T}heory}, volume 785 of {\em Contemp.
  Math.}, pages 23--34. Amer. Math. Soc., [Providence], RI, 2023.

\bibitem{Freeidealrings}
P.~M. Cohn.
\newblock {\em Free ideal rings and localization in general rings}, volume~3 of
  {\em New Mathematical Monographs}.
\newblock Cambridge University Press, Cambridge, 2006.

\bibitem{ACauchykernel}
Fabrizio Colombo, Graziano Gentili, and Irene Sabadini.
\newblock A {C}auchy kernel for slice regular functions.
\newblock {\em Ann. Global Anal. Geom.}, 37(4):361--378, 2010.

\bibitem{MR58591}
E.~R. Kolchin.
\newblock Galois theory of differential fields.
\newblock {\em Amer. J. Math.}, 75:753--824, 1953.

\bibitem{LamLeory1988}
T.~Y. Lam and A.~Leroy.
\newblock Vandermonde and {W}ronskian matrices over division rings.
\newblock {\em J. Algebra}, 119(2):308--336, 1988.

\bibitem{WedderburnII}
T.~Y. Lam, A.~Leroy, and A.~Ozturk.
\newblock Wedderburn polynomials over division rings. {II}.
\newblock In {\em Noncommutative rings, group rings, diagram algebras and their
  applications}, volume 456 of {\em Contemp. Math.}, pages 73--98. Amer. Math.
  Soc., Providence, RI, 2008.

\bibitem{MR2025912}
T.~Y. Lam and Andr\'{e} Leroy.
\newblock Wedderburn polynomials over division rings. {I}.
\newblock {\em J. Pure Appl. Algebra}, 186(1):43--76, 2004.

\bibitem{lam1994hilbert}
TY~Lam and A~Leroy.
\newblock Hilbert 90 theorems over division rings.
\newblock {\em Transactions of the American Mathematical Society},
  345(2):595--622, 1994.

\bibitem{lamleroy1988algebraic}
TY~Lam and Andr{\'e} Leroy.
\newblock Algebraic conjugacy classes and skew polynomial rings.
\newblock In {\em Perspectives in ring theory}, pages 153--203. Springer, 1988.

\bibitem{leroy2012noncommutative}
Andr{\'e} Leroy.
\newblock Noncommutative polynomial maps.
\newblock {\em Journal of Algebra and its Applications}, 11(04):1250076, 2012.

\bibitem{MR721171}
G\"{u}nter Pilz.
\newblock {\em Near-rings}, volume~23 of {\em North-Holland Mathematics
  Studies}.
\newblock North-Holland Publishing Co., Amsterdam, second edition, 1983.
\newblock The theory and its applications.

\end{thebibliography}

 \end{document}